\def\caL{\mathcal{L}}
\newtheorem{thm}{Theorem}[section]
\newtheorem{lem}[thm]{Lemma}
\newtheorem{prop}[thm]{Proposition}
\newtheorem{remark}[thm]{Remark}
\newcommand{\R}{\mathbb{R}}
\newcommand{\U}{\mathcal{U}}
\begin{document}
\title[Entire Solutions on the Half Line]{Entire Solutions of the Fisher-KPP Equation
on the Half Line$^\S$}
 \thanks{$\S$ This research was partly supported by the NSFC (No.  11671262) and JSPS KAKENHI Grant Number, 18H01139, 26247013 and JST CREST Grant Number JPMJCR14D3.}
\author[B. Lou, J. Lu and Y. Morita]{Bendong Lou$^{\dag, *}$, Junfan Lu$^\ddag$ and Yoshihisa Morita$^{\natural}$}
\thanks{$\dag$ Mathematics \& Science College, Shanghai Normal University, Shanghai 200234, China.}
\thanks{$\ddag$ School of Mathematical Sciences, Tongji University, Shanghai 200092, China.}
\thanks{$\natural$ Department of Applied Mathematics and Informatics, Ryukoku University, Seta Otsu 520-2194, Japan.}
\thanks{$*$ Corresponding author.}
\thanks{{\bf Emails:} {\sf lou@shnu.edu.cn} (B. Lou),  {\sf 1410541@tongji.edu.cn} (J. Lu), {\sf morita@rins.ryukoku.ac.jp} (Y. Morita)}
\date{}

\begin{abstract}
In this paper we study the entire solutions of the Fisher-KPP equation $u_t=u_{xx}+f(u)$ on the half line
$[0,\infty)$ with Dirichlet boundary condition at $x=0$.  (1). For any $c\geq 2\sqrt{f'(0)}$,
we show the existence of an entire solution $\U^c(x,t)$ which connects the traveling wave solution $\phi^c(x+ct)$ at $t=-\infty$
and the unique positive stationary solution $V(x)$ at $t=+\infty$; (2). We also construct an entire solution $\U(x,t)$
which connects the solution of $\eta_t =f(\eta)$ at $t=-\infty$ and $V(x)$ at $t=+\infty$.
\end{abstract}

\subjclass[2010]{35K57, 35B08, 35B40}
\keywords{Reaction-diffusion equation; Fisher-KPP equation; entire solution; traveling wave solution.}
\maketitle

\baselineskip 17pt

\section{Introduction}
Consider the following reaction-diffusion equation:
\begin{equation}\label{eq}
u_t = u_{xx} +f(u),
\end{equation}
where $f\in C^1 ([0,1])$ is a Fisher-KPP type of nonlinearity:
\begin{equation}\label{Fisher-KPP}
f(0)=f(1)=0, \quad f'(1)<0< f'(0),\quad f(u) >0 \mbox{ and } f'(u)\leq f'(0) \mbox{ for } u\in (0,1).
\end{equation}
It is well known that, for each real number $c\geq c_0: = 2\sqrt{f'(0)} $, the equation \eqref{eq} admits a
unique traveling wave solution $u= \phi^c (x+ct)$, where $\phi^c (z)$ satisfies
\begin{equation}\label{phi eq}
\phi_{zz} -c\phi_z +f(\phi)=0,\quad \phi(-\infty)=0,\quad \phi(\infty)=1,\quad \phi_z(z)>0 \mbox{ for } z\in\R.
\end{equation}
Traveling wave solutions are a special kind of {\it entire solutions} (that is, solutions of \eqref{eq}
defined for all $t\in \R$),
and play a key role in understanding the dynamics of the equation.
In order to figure out the complete global dynamics, however, we need to investigate other types of entire solutions.

Since the pioneering works in \cite{HN} and \cite{Y}, there are huge number of works on entire solutions to
reaction-diffusion equations. In particular, in one space dimension, \cite{CG, CGN, FMN, GM, HN, MN1}, etc. studied entire
solutions of the reaction-diffusion equations like \eqref{eq} with monostable or bistable type of nonlinearities.
The readers also see the survey paper \cite{MN2} and the references cited in.
Meanwhile, some authors studied the entire solutions for lattice differential equations (cf. \cite{WLR2, WSY})
and for equations with nonlocal or delayed terms (cf. \cite{LSW, SLW,WLR1}), etc.

In this paper we are interested in the entire solutions of the equation \eqref{eq} on the half line:
\begin{equation}\label{p}
\left\{
\begin{array}{ll}
 u_t = u_{xx} + f(u),  & x>0,  \ t>0,  \\
 u(0,t)= 0 , &  t>0.
\end{array}
\right.
\end{equation}
This problem has no traveling wave solutions but
a unique positive stationary solution $u =V(x)$ with
$$
V(0)=0,\quad V(\infty)=1,\quad V'(x)>0 \mbox{ for } x\geq 0
$$
(see details in the next section) and, by a similar argument as in \cite{AW1, DM} one sees that,
any solution $u(x,t)$ of \eqref{p} starting from a nonnegative initial data $u(x,0)$ converges as $t\to \infty$ to
$V(x)$ in the topology of $L^\infty_{loc} ([0,\infty))$. Hence, any positive entire solution $\U(x,t)$,
if it exists, also satisfies $\U(\cdot, t)\to V(\cdot)$ as $t\to \infty$.
To distinguish entire solutions, we classify the $\alpha$-limit
of $\U(x,t+t_n)$ for any $t_n \to -\infty$.
In fact, we construct two types of entire solutions for \eqref{p}.
Each of the first type connects a traveling wave solution $\phi^c (x+ct)$ at $t=-\infty$ with $V(x)$ at $t=\infty$,
while the second type connects the solution $\eta (t)$ of the ordinary differential equation $\eta'(t)=f(\eta)$
at $t=-\infty$ with $V(x)$ at $t=\infty$.

On the first type of entire solutions we have the following result.


\begin{thm}\label{thm:I}
Assume \eqref{Fisher-KPP}. Then for each $c\geq c_0 := 2 \sqrt{f'(0)} $ and any $\theta \in \R$,
the problem \eqref{p} has an entire solution $\U^c (x,t)$ which satisfies
\begin{equation}\label{property of Uc}
\U^c_x  (x,t)>0 \mbox{ for } x> 0,\ t\in \R; \qquad  \U^c (\infty,t) =1 \mbox{ for } t\in \R,
\end{equation}
\begin{equation}\label{Uc to V}
\sup\limits_{x\in [0,\infty)} | \U^c (x,t) - V(x)| \to 0 \mbox{ as } t\to \infty,
\end{equation}
and
\begin{equation}\label{Uc to phi}
\sup\limits_{x\in [0,\infty)} |\U^c (x,t) -  \phi^c (x+ct-\theta)| \to 0 \mbox{ as  } t\to -\infty.
\end{equation}
\end{thm}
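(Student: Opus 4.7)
I would construct $\U^c$ as a monotone decreasing limit of solutions of approximating initial-boundary value problems, squeezed between a natural supersolution and a subsolution that both track $\phi^c(x+ct-\theta)$ as $t\to-\infty$.

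\emph{Supersolution and approximating sequence.} Let $w(x,t):=\min\{V(x),\phi^c(x+ct-\theta)\}$. Since $V$ is a stationary solution of \eqref{p} and $\phi^c(x+ct-\theta)$ satisfies the PDE on $\R$ with boundary value $\phi^c(ct-\theta)\geq 0=u(0,t)$, both are supersolutions of \eqref{p}, and hence so is their minimum; moreover $w(0,t)=0$. For each integer $n\geq 1$ let $u_n$ solve \eqref{p} on $\{x\geq 0,\ t\geq -n\}$ with $u_n(x,-n)=w(x,-n)$. Comparison yields $u_n\leq w$ on $[-n,\infty)$, and for $m>n$ we have $u_m(\cdot,-n)\leq w(\cdot,-n)=u_n(\cdot,-n)$, whence $u_m\leq u_n$ for $t\geq -n$. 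The pointwise limit $\U^c:=\lim_n u_n$ therefore exists, and parabolic interior Schauder estimates upgrade convergence to $C^{2,1}_{loc}$, so $\U^c$ is a classical entire solution with $0\leq\U^c\leq w$.

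\emph{Subsolution and the limit $t\to-\infty$.} Define $v(x,t):=\max\{0,\phi^c(x+ct-\theta)-q(t)\}$ with $q(t):=Ae^{f'(0)t}$ for a constant $A>0$. On $\{v>0\}$, using $\phi^c_{zz}-c\phi^c_z+f(\phi^c)=0$, one computes
\[
v_t-v_{xx}-f(v)=\bigl[f(\phi^c)-f(\phi^c-q)\bigr]-q'(t),
\]
which is $\leq f'(0)q-q'=0$ by the Fisher-KPP condition $f'\leq f'(0)$. The boundary inequality $v(0,t)\leq 0$ reduces to $\phi^c(ct-\theta)\leq q(t)$, which, using $\phi^c(z)\sim Be^{\lambda_c z}$ (or $B|z|e^{\lambda_c z}$ if $c=c_0$) as $z\to-\infty$ with $\lambda_c$ the smaller root of $\lambda^2-c\lambda+f'(0)=0$ (so $\lambda_c c=\lambda_c^2+f'(0)>f'(0)$), holds for all $t\leq T_0$ sufficiently negative. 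Comparison on $[-n,T_0]$ gives $u_n\geq v$, and in the limit $\U^c\geq v$ for $t\leq T_0$. Combined with $\U^c\leq w\leq\phi^c(x+ct-\theta)$ this produces the sandwich
\[
\phi^c(x+ct-\theta)-q(t)\leq\U^c(x,t)\leq\phi^c(x+ct-\theta),\qquad x\geq 0,\ t\leq T_0,
\]
so $\sup_{x\geq 0}|\U^c(x,t)-\phi^c(x+ct-\theta)|\leq q(t)\to 0$ as $t\to-\infty$, proving \eqref{Uc to phi}.

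\emph{Remaining properties and main obstacle.} For \eqref{property of Uc}, the initial data $w(\cdot,-n)$ is nondecreasing in $x$; comparing $u_n(\cdot+\varepsilon,\cdot)$ with $u_n(\cdot,\cdot)$ on $\{x\geq 0\}$ and letting $\varepsilon\to 0$ shows $u_n$, and hence $\U^c$, is nondecreasing in $x$, and the strong maximum principle applied to $\U^c_x$ upgrades this to $\U^c_x>0$ for $x>0$. The identity $\U^c(\infty,t)=1$ follows from the sandwich together with $\phi^c(x+ct-\theta)\to 1$ as $x\to\infty$ for $t\leq T_0$, and is propagated to $t>T_0$ by comparison with the PDE solution started at $t=T_0$, whose spatial tail is already close to $1$. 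Finally \eqref{Uc to V} is obtained by invoking the convergence-to-$V$ result (\cite{AW1,DM}, stated in the introduction) and upgrading locally uniform convergence to uniform-in-$x$ convergence by splitting $[0,\infty)=[0,M]\cup[M,\infty)$ and using $V(x),\U^c(x,t)\to 1$ as $x\to\infty$ uniformly for large $M$. The crux of the argument is the subsolution construction: while the PDE inequality is immediate from KPP, the boundary inequality hinges on the precise matching of the decay rate of $\phi^c(ct-\theta)$ (exponent $\lambda_c c$) with the rate of $q(t)$ (exponent $f'(0)$), and the strict inequality $\lambda_c c>f'(0)$ coming from the characteristic equation is the essential quantitative input that ultimately yields the uniform-in-$x$ bound $|\U^c-\phi^c|\leq q(t)$.
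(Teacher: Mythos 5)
Your overall strategy coincides with the paper's: squeeze the entire solution between $\overline u=\phi^c(x+ct-\theta)$ and $\underline u=\phi^c(x+ct-\theta)-\rho(t)$ with $\rho$ decaying exponentially as $t\to-\infty$, the boundary inequality resting on the decay $\phi^c(z)\lesssim e^{\lambda_c z}$ (resp.\ $|z|e^{\lambda_{c_0}z}$) and the strict inequality $c\lambda_c>f'(0)$, and then realize $\U^c$ as a monotone limit of initial--boundary value problems started at $t=-n$. Your single choice $q(t)=Ae^{f'(0)t}$ is a small improvement: it treats $c=c_0$ and $c>c_0$ uniformly (the polynomial factor $|z|$ is absorbed because $c_0\lambda_{c_0}=2f'(0)>f'(0)$), whereas the paper uses $e^{c\lambda_c t}$ for $c>c_0$ and a separate exponent $2f'(0)-\delta$ for $c=c_0$. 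Two steps, however, are asserted rather than proved, and both are genuinely load-bearing.

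First, the comparison $u_n\ge v$ requires the initial ordering $\min\{V,\phi^c(\cdot-cn-\theta)\}\ge \max\{0,\phi^c(\cdot-cn-\theta)-q(-n)\}$, i.e.\ $V(x)\ge \phi^c(x-cn-\theta)-q(-n)$ wherever the right side is positive. This is not obvious: $V$ and the translate of $\phi^c$ cross near $x=0$ (since $V(0)=0<\phi^c(-cn-\theta)$), and at $x=+\infty$ the two profiles approach $1$ with \emph{different} exponential rates ($\sqrt{-f'(1)}$ for $V$ versus $|\lambda_1^-(c)|<\sqrt{-f'(1)}$ for $\phi^c$), so the inequality near $+\infty$ needs a tail comparison. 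The claim is true for $n$ large, but it must be checked; alternatively the whole issue disappears if you take $u_n(\cdot,-n)=\phi^c(\cdot-cn-\theta)$ as initial data (this is the construction of \cite{FMN,GM} that the paper invokes): then $u_n(\cdot,-n)\ge v(\cdot,-n)$ and $u_n\le\overline u$ are immediate, and monotonicity in $n$ follows from $\overline u$ being a supersolution. Second, the sandwich only yields $\U^c(\infty,t)\in[1-q(t),1]$ for each fixed $t\le T_0$, which does \emph{not} give $\U^c(\infty,t)=1$. The missing ingredient is that $\kappa(t):=\U^c(\infty,t)$ (which exists by monotonicity in $x$) satisfies the ODE $\kappa'=f(\kappa)$ — exactly as in Lemma \ref{lem:k(t) 01}(i) of the paper — so that $\kappa(t_0)<1$ would force $\kappa(t)\to0$ as $t\to-\infty$, contradicting $\kappa(t)\ge 1-q(t)\to1$. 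With these two points supplied, the argument is complete; note for fairness that the paper's own written proof of Theorem \ref{thm:I} only addresses existence and \eqref{Uc to phi}, so your treatment of \eqref{property of Uc} and \eqref{Uc to V} goes beyond what is printed there.
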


To obtain the second type of entire solution we need a slightly stronger condition on $f$:
\begin{equation}\label{additional cond}
  f\in C^2([0,1]),\quad f''(u)\leq 0 \mbox{ in } [0,1].
\end{equation}

\begin{thm}\label{thm:II}
Assume \eqref{Fisher-KPP} and \eqref{additional cond}. Then the problem \eqref{p} has an entire solution $\U (x,t)$
with the following properties:
\begin{equation}\label{monotone of U}
\U_t(x,t)>0,\quad \U_x(x,t)>0, \quad \U_{xx}(x,t)<0 \mbox{ for all } x> 0,\; t\in \R;
\end{equation}
\begin{equation}\label{infinity = 1}
\U (\infty,t) =1 \mbox{ for all } t\in \R;
\end{equation}
\begin{equation}\label{U to V}
\sup_{x\in [0,\infty)} | \U (x ,t) - V(x)|  \to 0 \mbox{ as } t\to \infty;
\end{equation}
and, with $\xi_m (s)$ denoting the $m$-level set of $\U(\cdot,s)$ for each given $m\in (0,1)$,
\begin{equation}\label{t to -infinity}
\left\{
 \begin{array}{l}
 \xi_m(s)\to \infty \mbox{ and } -\xi'_m (s) \to \infty \mbox{ as } s\to -\infty,\\
 \U(x+\xi_m(s),t+s)\to \eta_m (t) \mbox{ as } s\to -\infty, \quad  \mbox{ in } C^{2,1}_{loc} (\R \times \R) \mbox{ topology},
 \end{array}
 \right.
\end{equation}
where $\eta_m(t)$ is the unique solution of the initial value problem:
\begin{equation}\label{ivp}
\eta' (t) =f(\eta),\quad \eta(0)=m.
\end{equation}
\end{thm}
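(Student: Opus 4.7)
The plan is to build $\U$ as a subsequential limit of solutions of \eqref{p} emanating from small, monotone, concave initial data, time-shifted so that a prescribed level set is anchored at $t=0$; the $\alpha$-limit is then read off by changing variables to the frame co-moving with the $m$-level set. Concretely, fix $\alpha_n\downarrow 0$ and let $u_n$ solve \eqref{p} with $u_n(x,0)=\alpha_n V(x)$. The Fisher-KPP condition ($f(u)/u$ non-increasing) makes $\alpha_n V$ a strict subsolution, so $\partial_t u_n>0$ in $\{x>0\}$ by the strong maximum principle; differentiating the equation in $x$, using $f''\le 0$, and the boundary identity $\partial_x^2 u_n(0,t) = \partial_t u_n(0,t) - f(0) = 0$ gives $\partial_x u_n>0$ and $\partial_x^2 u_n<0$ in $\{x>0\}$. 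Fix $x_*>0$ and $m_0\in(0,V(x_*))$; since $u_n(x_*,0)\downarrow 0$ and $u_n(x_*,t)\to V(x_*)>m_0$, there is a unique $T_n\to\infty$ with $u_n(x_*,T_n)=m_0$. Setting $\U_n(x,t):=u_n(x,t+T_n)$, parabolic Schauder estimates give relative compactness in $C^{2,1}_{loc}([0,\infty)\times\R)$, and a diagonal extraction produces an entire solution $\U$ with $\U(0,t)=0$, $\U(x_*,0)=m_0$, and the non-strict forms of \eqref{monotone of U}; the strong maximum principle promotes these to the strict inequalities ($\U_t\equiv 0$ would force $\U\equiv V$, against $\U(x_*,0)<V(x_*)$; $\U_{xx}\equiv 0$ would make $\U$ linear in $x$, incompatible with the Dirichlet datum and boundedness).

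\emph{Forward limit.} The comparison $u_n\le V$ passes to $\U\le V$. Since $\U_t>0$ and $\U\le V$, $\U(\cdot,t)$ increases to a stationary limit $\U_\infty\le V$ with $\U_\infty(x_*)\ge m_0>0$; uniqueness of the positive stationary solution of \eqref{p} forces $\U_\infty=V$, and a Dini argument using $x$-monotonicity and $V(\infty)=1$ yields the uniform form \eqref{U to V} on $[0,\infty)$. The same uniform convergence, combined with the $x$-monotonicity and boundedness of $\U$, rules out any defect at $x=\infty$ for finite $t$, giving $\U(\infty,t)=1$ for every $t\in\R$.

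\emph{Backward limit: the level-set frame.} The monotone limit $L(x):=\lim_{s\to-\infty}\U(x,s)\ge 0$ is a nonnegative stationary subsolution below $V$ with $L(x_*)\le m_0<V(x_*)$; uniqueness forces $L\equiv 0$, so $\xi_m(s)\to\infty$. In the frame variable $v^s(y,\tau):=\U(y+\xi_m(s),\tau+s)$, defined on $\{y>-\xi_m(s)\}\times\R$ with $v^s(-\xi_m(s),\tau)=0$, $v^s(0,0)=m$, and inheriting $v^s_\tau\ge 0$, $v^s_y\ge 0$, $v^s_{yy}\le 0$, parabolic regularity extracts along any sequence $s_n\to-\infty$ a $C^{2,1}_{loc}$-limit $v^*$ on $\R\times\R$ solving $v^*_\tau=v^*_{yy}+f(v^*)$ with these sign conditions and $v^*(0,0)=m$.

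\emph{Main obstacle: spatial constancy of $v^*$.} For each fixed $\tau$, $v^*(\cdot,\tau)$ is bounded, nondecreasing, and concave on all of $\R$; concavity gives
\[
v^*(y,\tau)\le v^*(y_0,\tau)-v^*_y(y_0,\tau)(y_0-y) \quad \text{for } y\le y_0,
\]
so $v^*_y(y_0,\tau)>0$ anywhere would force $v^*(y,\tau)\to -\infty$ as $y\to-\infty$, contradicting $v^*\ge 0$. Hence $v^*(y,\tau)=g(\tau)$, and substitution in the PDE with $g(0)=m$ yields $g\equiv\eta_m$. Uniqueness of subsequential limits upgrades this to the full convergence $v^s\to\eta_m$ in $C^{2,1}_{loc}(\R\times\R)$, the second line of \eqref{t to -infinity}. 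Finally, differentiating $\U(\xi_m(s),s)=m$ gives $\xi_m'(s)=-v^s_\tau(0,0)/v^s_y(0,0)$; as $s\to-\infty$ the numerator tends to $\eta_m'(0)=f(m)>0$ while the denominator tends to $v^*_y(0,0)=0$, so $-\xi_m'(s)\to\infty$.
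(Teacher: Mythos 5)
Your construction follows the same architecture as the paper's (small, monotone, concave initial data that are subsolutions, a time shift normalizing the solution at a fixed reference, compactness, then a level-set frame for the $\alpha$-limit), and most steps are sound: the monotonicity/concavity of $u_n$, the extraction of $\U$, the forward convergence \eqref{U to V}, and the concavity argument forcing the backward limit $v^*$ to be spatially constant are all essentially the arguments in the paper. However, there is a genuine gap at \eqref{infinity = 1}. Writing $\kappa(t):=\U(\infty,t)$, one checks (as in the paper's Lemma \ref{lem:k(t) 01}) that $\kappa'=f(\kappa)$; so a priori $\kappa$ could be a heteroclinic of the ODE with $\kappa(t)\to 1$ as $t\to\infty$ and $\kappa(t)\to 0$ as $t\to-\infty$, with $\kappa(t)<1$ for every finite $t$. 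This scenario is perfectly consistent with the uniform convergence $\sup_x|\U(x,t)-V(x)|\to 0$ as $t\to+\infty$ (since then $|1-\kappa(t)|\to 0$), with $x$-monotonicity, and with boundedness. Your sentence ``rules out any defect at $x=\infty$ for finite $t$'' is therefore not a proof; this is precisely the ``surprising'' point the paper flags as the technical heart of Theorem \ref{thm:II}. The paper excludes the scenario $\kappa(t_1)\in(0,1)$ by a quantitative contradiction: concavity of $\U(\cdot,t)$ forces $\beta(t):=\U_x(0,t)$ to decay more slowly than $[\kappa(t)]^{3/2}$ as $t\to-\infty$ (Lemma \ref{lem:k(t) 01}(ii)), while explicit heat-kernel comparisons with the linear problems \eqref{IBVP-v}--\eqref{IBVP-w} show $\beta(\tau)\le 2\sqrt{M}M_1[\kappa(\tau)]^{3/2}$ at suitable times $\tau\ll -1$ (Proposition \ref{prop:entire sol II-2}). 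Nothing in your proposal substitutes for this estimate.

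The gap propagates into your backward-limit analysis: if one only knows $\kappa(s)\to 0$ as $s\to-\infty$, then for fixed $m\in(0,1)$ the level set $\xi_m(s)$ need not exist once $\kappa(s)\le m$, so the co-moving frame $v^s$ is not defined for $s\ll -1$ and the statement \eqref{t to -infinity} is vacuous there. (The paper makes exactly this remark before Lemma \ref{lem:ximt to infty}.) Once \eqref{infinity = 1} is established, the rest of your argument — spatial constancy of $v^*$ via concavity and boundedness, identification with $\eta_m$, and $\xi_m'(s)=-\,\U_t/\U_x\to-\infty$ — is correct and matches the paper's Proposition \ref{prop:calU to eta-m}.
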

\noindent

These two theorems show that, in a phase space (see Figure 1), the orbit of a first type of entire solution $\U^c (\cdot,t)$ connects
that of $\phi^c (\cdot +c t+\theta)$ at $t=-\infty$ and that of $V(\cdot)$ at $t=\infty$; 
while the orbit of the second type of entire solution $\U $ connects that of $\eta_m(t)$ at $t=-\infty$ and
that of $V(\cdot)$ at $t=\infty$. 
Since, in the phase space, the points $\phi^c (\cdot)$ form a \lq\lq one-dimensional continuous manifold" as $c$ increasing
from $c_0 $ to $\infty$ (see $\mathcal{M}$ in Figure 1), and since $\phi^c(x+\hat{\xi}_m) \to m =\eta_m(0)$ as $c\to \infty$, locally
uniformly in $x\in \R$,  where $\hat{\xi}_m$ is the $m$-level set  of $\phi^c(\cdot)$, we can roughly say that
$\phi^c(x+ct +\hat{\xi}_m)\to \eta_m(t)$ as $c\to \infty$.

We note that our setting, the Dirichlet boundary condition on the Fisher-KPP equation in the half-line,
imposes a strong restriction on the structure of the solutions.
In fact, the readers could find various types of entire solutions in \cite{HN,MN2}
for reaction-diffusion equations in the whole space while in our problem
it seems that all the possible entire solutions consist of the stationary solutions ($0$ and $V$),
the two types of entire solutions ($\U^c$ and $\U$) and their temporal translations.
It, however, not so simple to determine all the entire solution even in our simple setting and
a further study would be required for the desired assertion.

\begin{figure}
\begin{center}
\end{center}
\caption{Orbits of the entire solutions on the phase space.}
\end{figure}

Our approach to these theorems are quite different. To prove Theorem \ref{thm:I} we directly construct a pair of
sub- and supersolutions: $\underline{u}(x,t)=\phi^c(x+ct+\theta)-\rho(t)$ and
$\overline{u}(x,t)=\phi^c(x+ct+\theta)$ over $t\leq 0$, for some $\theta\in \R$ and $\rho(t)\searrow 0\
(t\to -\infty)$.
Then the entire solution can be constructed in between as in \cite{FMN, GM} (see details in section 3).
To prove Theorem \ref{thm:II} we consider a sequence of initial-boundary value problems with
initial data $\approx \frac1n$, each solution $u_n(x,t)$ is concave in $x$.
A subsequence of $\{u_n\}$ is proved to converge to a second type of entire solution $\mathcal{U}(x,t)$, whose
concavity is a main feature to distinguish it from the first type of entire solutions.
In particular, to prove the property $\mathcal{U}(\infty, t)\equiv 1$
(despite of the fact $u_n(\infty, 0)=\frac1n$), we have to do some precise estimate for the
solutions of several related linear problems (see details in subsection 4.2).
This is quite different from the approach in section 3.

This paper is arranged as the following. In section 2, as preliminaries, we present the positive
stationary solution $V(x)$ of \eqref{p} and traveling wave solutions $\phi^c(x+ct)$ of \eqref{eq}. In section 3,
we show the existence of the first type of entire solutions $\U^c (x,t)$ for any $c\geq c_0 $, and prove Theorem \ref{thm:I}.
In section 4 we construct the second type of entire solution $\U (x,t)$ and prove Theorem \ref{thm:II}.

\section{Stationary Solutions and Traveling Wave Solutions}

In this section we present the positive stationary solution $V(x)$ and traveling wave solutions of \eqref{p}$_1$.
Consider the following equation
\begin{equation}\label{phase sol}
q''(z) - c q'(z)+f(q)=0,\quad q(z) \geq   0 \quad \mbox{ for } z\in J,
\end{equation}
where $J$ is some interval in $\R$. Note that a nonnegative stationary solution $u$ of \eqref{p}$_1$ solves
\eqref{phase sol} with $c = 0$ in $J=(0,\infty)$, and a nonnegative traveling wave solution $u(x,t)= q(x+ct)$ of \eqref{eq}
solves \eqref{phase sol} in $\R$.
The equation \eqref{phase sol} is equivalent to the system
\begin{equation}\label{q-p}
\left\{
\begin{array}{l}
q'(z)=p,\\ p'(z)=  c  p- f(q).
\end{array}
\right.
\end{equation}
A solution $(q(z), p(z))$ of this system traces out a trajectory in the $q$-$p$ phase plane.
It is easily seen that $(0,0)$ and $(1,0)$ are two equilibrium
points of the system \eqref{q-p}.
The eigenvalues of the corresponding linearizations at these points are
\begin{equation}\label{eigenvalues}
\lambda_{0}^{\pm} (c)=\frac{c \pm\sqrt{c^2 -4f'(0)}}{2}\ \ (\mbox{at } (0,0))\quad
\mbox{and} \quad
\lambda_{1}^{\pm} (c)=\frac{c \pm\sqrt{c^2 -4 f'(1)}}{2}\ \ (\mbox{at } (1,0)),
\end{equation}
respectively. Since $ f'(0)>0$ and $f'(1)<0$, $(1,0)$ is always a saddle point, however, $(0,0)$ is a center
or a focus when $0\leq c < c_0 =2\sqrt{f'(0)}$, and it is an unstable node when $c \geq c_0 $.
Using the phase plane analysis (cf. \cite{AW1, DL, GLZ}), it is not difficult to give the
solutions of \eqref{phase sol}. We list two types of them, which will be used in this paper.

\medskip
{\rm (I)} \textbf{Positive stationary solution $V(z)$ on the half line}.
When $c =0$, the system \eqref{q-p} can be solved explicitly. In particular, the trajectory
tending to $(1,0)$ in the domain $\{(q,p) \mid 0<q<1,\ p>0\}$ is given by
$p= \sqrt{2 \int_q^1 f(s) ds}$ (see $\Gamma_1$ in Figure 2 (a)), which corresponds to a solution $q=V(z)$
of \eqref{phase sol} with $c=0$. It satisfies (by shifting its zero to $z=0$)
$$
V (0)=0, \quad V (\infty)=1,\quad V' (z)>0 \mbox{ for } z\in [0,+\infty).
$$

\medskip
{\rm (II)} \textbf{Strictly increasing solutions $\phi^c (z)$ in $\R$ in case $c \geq c_0$}.
It is well known that (cf. \cite{AW1,KPP}) for any $c\geq c_0 $, the equation \eqref{phase sol} has a solution
$q=\phi^c (z)$ satisfying
\begin{equation}\label{tw R}
\phi^c (-\infty )=0,\quad \phi^c (\infty)=1,\quad \phi^c (0)=\frac12,\quad  \phi^c_z (z)>0 \mbox{ for } z\in \R
\end{equation}
(see $\Gamma_2$ in Figure 2 (b)).
For each $\phi^c (z)$, it is clear that $u=\phi^c (x+c t)$ is a traveling wave solution of \eqref{eq}.
Moreover, when $c = c_0 = 2 \sqrt{f'(0)} $, we have
$$
\phi^{c_0 } (z) \sim |z| e^{ c_0  z/2} \mbox{ as }z\to -\infty ;
$$
when $c> c_0 $, we have
$$
\phi^{c} (z) \sim e^{\lambda_c  z} \mbox{ as }z\to -\infty,\qquad \mbox{with } \lambda_c := \lambda_{0}^- (c)=\frac12 (c - \sqrt{c^2 -4 f'(0)}) >0.
$$

\begin{figure}[!htbp]
\begin{center}
\end{center}
\caption{\small{Trajectories of the system \eqref{q-p}.
(a) $c=0$;  (b) $c\geq  2\sqrt{f'(0)}$.}}
\end{figure}


\section{The First Type of Entire Solutions}
We prove the existence of $\mathcal{U}^c$ and \eqref{Uc to phi} in Theorem \ref{thm:I}.
We extend $f(u)$ outside the interval $[0,1]$ so that
\begin{equation}
\label{fprime}
f'(u)=f'(0), \qquad u<0
\end{equation}
In fact, since we will be able to obtain the entire solution taking the values in $(0,1)$, this modification does not
affect the desired entire solution.
We define
\[
\caL[u]:=u_t-u_{xx}-f(u).
\]
Then $c\lambda_c=f'(0)b_c $, where $\lambda_c$ is defined as above and
\begin{equation*}
b_c:= \frac{2c}{c+\sqrt{c^2-4f'(0)}} \quad(2\sqrt{f'(0)}\leq c<\infty)
\end{equation*}
is strictly monotone decreasing in $c$ and $f'(0)<c\lambda_c\leq 2f'(0)$.
Indeed,
\begin{eqnarray*}
&&
\frac{d}{dc}\left(\frac{c}{c+\sqrt{c^2-4f'(0)}}\right)=
\frac1{c+\sqrt{c^2-4f'(0)}}-\frac{c(1+c/\sqrt{c^2-4f'(0)})}{(c+\sqrt{c^2-4f'(0)})^{2}} \\
&&
=\frac{1}{(c+\sqrt{c^2-4f'(0)})^{2}}\left(\sqrt{c^2-4f'(0)}-c^2/\sqrt{c^2-4f'(0)}\right) \\
&&
=\frac{-4f'(0)}{(c+\sqrt{c^2-4f'(0)})^{5/2}}<0.
\end{eqnarray*}
We note that $b_{c_0}=2$.

We first consider the case $c\in(c_0,\infty)$.
Then there is a positive number $A_c$ such that
\[
0<\phi^{c}(z)\leq A_ce^{\lambda_c z},\quad -\infty<z\leq 0.
\]
For arbitrarily given $\theta \in \R$, we put
\begin{eqnarray}
\label{ou}
&&
\overline{u}(x,t):=\phi^c(x+ct-\theta), \\
&&
\underline{u}(x,t):=\phi^c(x+ct-\theta)-\rho(t),
\label{uu}
\end{eqnarray}
where
\begin{equation}
\label{etat}
\rho(t) := A_ce^{-\lambda_c\theta} e^{c\lambda_ct}\qquad(-\infty<t\leq0).
\end{equation}

It is clear that $\overline{u}$ of \eqref{ou} is a supersolution of \eqref{p}
not only for $c>c_0$ but also for $c=c_0$.
Plug $\underline{u}$ of \eqref{uu} into $\caL[u]$ to yield
\begin{eqnarray*}
\caL[\underline{u}]&=&c(\phi^c)'-\dot{\rho}(t)-(\phi^c)''-f(\phi^{c}-\rho(t)) \\
& = &-\dot{\rho}(t)+f(\phi^{c})-f(\phi^{c}-\rho(t)) \\
&=&-\dot{\rho}(t) -\int_0^1 f'(\phi^{c}-s\rho(t))ds(-\rho(t))\\
& = & \rho(t) \Big[ -c\lambda_c+\int_0^1 f'(\phi^{c}-s\rho(t))ds \Big] \\
&\leq&\rho(t)[-c\lambda_c+ f'(0)] <0.
\end{eqnarray*}
Moreover,
\begin{eqnarray*}
\underline{u}(0,t)&=&\phi^{c}(ct-\theta)-\rho(t)\leq A_c e^{\lambda_c(ct-\theta)}-\rho(t) = 0\qquad(t\leq \theta/c),
\end{eqnarray*}
which implies that $\underline{u}$ is a subsolution in $t\in(-\infty, \theta/c]$.

Next in the case $c=c_0$ we have
\[
c_0 \lambda_{c_0}=2f'(0),
\]
and
\[
0<\phi^{c_0}(z)\leq A_{c_0}|z|e^{\lambda_{c_0}z}, \quad -\infty<z\leq0,
\]
for a positive constant $A_{c_0}$.
We set
\[
\rho_* (t) :=\rho^0_*  e^{pt}\quad(t\leq0),\quad  p:=2f'(0)-\delta, \quad 0<\delta<f'(0).
\]
Then, in a similar way as in the previous case
$\underline{u}(x,t):=\phi^{c_0}(x+c_0t-\theta)-\rho_* (t)$ enjoys
\[
\caL[\underline{u}]\leq \rho_*(t)[-p+f'(0)]=\rho_*(t)[-f'(0)+\delta]<0.
\]
In order to show $\underline{u}(0,t)\leq0$, we compute
\begin{eqnarray*}
\underline{u}(0,t) & \leq & A_{c_0}|c_0 t-\theta |e^{\lambda_{c_0}(c_0 t-\theta)}-\rho^0_* e^{pt} \\
&\leq&
e^{pt}[A_{c_0}(|c_0 t|+|\theta|)e^{2f'(0) t-pt}e^{-\lambda_{c_0}\theta}-\rho^0_* ] \\
&=&
e^{pt}A_{c_0}[|c_0 t|e^{\delta t}e^{-\lambda_{c_0}\theta}+|\theta|e^{-\lambda_{c_0}\theta}e^{\delta t}   -\rho^0_* /A_{c_0}]\\
&\leq&
e^{pt}A_{c_0}[\{\sup_{t\leq0}|c_0 t|e^{\delta t}\}e^{-\lambda_{c_0}\theta}+|\theta|e^{-\lambda_{c_0}\theta}  -\rho^0_* /A_{c_0}], \quad (t\leq \theta/c_0).
\end{eqnarray*}
Thus, taking $\rho_*^0$ as
\[
\rho^0_* =A_{c_0}e^{-\lambda_{c_0}\theta}(\sup_{t\leq0}|c_0 t|e^{\delta t}+|\theta|)
\]
yields $\underline{u}(0,t)\leq0~(t\leq\theta/c_0)$.

In the sequel, for any $c\geq c_0$ and arbitrarily given $\theta$ we have obtained the sub-super solution pairs,
by which the exisitence of a solution $\mathcal{U}^c(x,t)$ sandwiched by
$\underline{u}(x,t)$ and $\overline{u}(x,t)$ in $t\in(-\infty,\theta/c]$ is shown
in a similar way as in \cite{FMN, GM}. This solution satisfies the desired asymptotic behavior as $t\to -\infty$
and can be extended to the whole time by the theorem of Cauchy problem.
This concludes the proof.
\qed

\begin{remark}\rm
We can find a similar subsolution to $\underline{u}$ of \eqref{uu} in \cite{FM}, where
they utilize it to prove the asymptotic stability of the traveling front solution to the bistable
reaction-diffusion equation.
On the other hand the present study is related to the asymptotic behavior as $t\to-\infty$.
We, however, see that this type of subsolution is quite useful.
\end{remark}


\section{The Second Type of Entire Solution}
In this section, we always assume \eqref{Fisher-KPP} and \eqref{additional cond}. We first construct a
second type of entire solution in subsection 4.1, and then study its properties in subsection 4.2.
Finally, in subsection 4.3 we study the limit of $\mathcal{U}$ as $t\to -\infty$ and prove
Theorem \ref{thm:II}. For simplicity, in what follows we write
\begin{equation}\label{def-mu}
\mu := \sqrt{f'(0)} = \frac{c_0}{2}.
\end{equation}

\subsection{Construction of the second type of entire solution}
By \eqref{Fisher-KPP}, there exists a large integer $N$ such that
$$
f'(u) >\frac12 f'(0) =\frac12 \mu^2 , \quad u\in \Big[ 0,\frac1N \Big].
$$
For each positive integer $n$, define
$$
\psi_n (x) := \left\{
 \begin{array}{ll}
\displaystyle \frac{1}{(n+N)\pi} \Big[ \sin \frac{\mu x}{\sqrt{2}} + \frac{\mu x}{\sqrt{2}} \Big] , &
                  \displaystyle 0\leq x\leq \frac{\sqrt{2}\pi}{\mu },\\
\displaystyle \frac{1}{n+N}, & \displaystyle x\geq \frac{\sqrt{2}\pi}{\mu}.
 \end{array}
 \right.
$$
Clearly, $\psi_n(x) \in C^2 ([0,\infty))$ and
\begin{equation}\label{property of phin}
\psi'_n (x)\geq 0,\quad   \psi''_n (x)\leq 0,\quad
\psi''_n (x) + f(\psi_n (x)) \geq \psi''_n (x) + \frac{\mu^2}{2} \psi_n (x) \geq 0  \mbox{ \ \ for } x\geq 0.
\end{equation}
We construct the second type of entire solution of \eqref{p} by using the solutions of the following
initial-boundary value problems:
\begin{equation}\label{IBVP1}
\left\{
 \begin{array}{ll}
 u_t = u_{xx} + f(u), & x>0,\ t>0,\\
 u(0,t)=0, & t>0,\\
 u(x,0) = \psi_n (x), & x\geq 0.
\end{array}
\right.
\end{equation}


\begin{lem}\label{lem:IBVP1}
For each positive integer $n$, the solution $u_n(x,t)$ of \eqref{IBVP1} exists for all $t>0$, and it satisfies
\begin{equation}\label{property of un}
(u_n)_t (x,t)>0,\quad  (u_n)_x (x,t)>0,\quad (u_n)_{xx} (x,t)<0 \mbox{\ \ for } x>0,\ t>0;
\end{equation}
\begin{equation}\label{convergence un to V}
u_n (\cdot, t) \to V(\cdot) \mbox{ as } t\to \infty, \quad \mbox{in } C^2_{loc} ([0,\infty)) \mbox{ topology};
\end{equation}
and
\begin{equation}\label{hnt < eta}
u_n (x, t) \leq \eta_{\frac{1}{n+N}} (t) \mbox{ for all } x\geq 0,\ t>0,
\end{equation}
where $\eta_{\frac{1}{n+N} }(t)$ is the solution of the initial value problem \eqref{ivp} with $m=\frac{1}{n+N}$.
\end{lem}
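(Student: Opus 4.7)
The plan is to establish the upper bound \eqref{hnt < eta} first (which simultaneously yields global existence), then the monotonicity and concavity properties \eqref{property of un}, and finally the convergence \eqref{convergence un to V}.

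First I would compare $u_n$ with the spatially constant function $\eta(t) := \eta_{1/(n+N)}(t)$ defined via \eqref{ivp}. Since $\eta$ solves $\eta'=f(\eta)$, viewed as a function of $(x,t)$ it is a classical supersolution of \eqref{IBVP1}$_1$, with $\eta(0)=1/(n+N)\geq \psi_n(x)$ and $\eta(t)\geq 0 = u_n(0,t)$. The standard comparison principle on the half line then yields \eqref{hnt < eta}, and since $0\leq \eta(t) < 1$ this provides a uniform $L^\infty$ bound that allows us to continue the solution globally.

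To obtain the three inequalities in \eqref{property of un}, I would apply the strong maximum principle successively to $v:=(u_n)_t$, $w:=(u_n)_x$, and $z:=(u_n)_{xx}$. Differentiating \eqref{IBVP1}$_1$ in $t$ yields $v_t = v_{xx} + f'(u_n) v$, with $v(0,t)=0$ and
\[
v(x,0) = \psi_n''(x) + f(\psi_n(x)) \geq 0
\]
by \eqref{property of phin}; the strong maximum principle gives $v>0$ in the interior. For $w$, the Hopf boundary lemma applied to $u_n$ (which is positive in the interior and vanishes on $\{x=0\}$) yields $w(0,t)>0$, while $w(x,0)=\psi_n'(x)\geq 0$; together with the decay of $w$ at infinity, coming from the eventual constancy of $\psi_n$ and parabolic smoothing, the maximum principle applied to the linearization $w_t = w_{xx}+f'(u_n)w$ gives $w>0$. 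For $z$, differentiating \eqref{IBVP1}$_1$ twice in $x$ produces
\[
z_t = z_{xx} + f'(u_n) z + f''(u_n)\,[(u_n)_x]^2,
\]
whose final term is nonpositive thanks to \eqref{additional cond}. Evaluating $u_t = u_{xx}+f(u)$ at $x=0$ and using $u_n(0,t)\equiv 0$ gives $z(0,t)=0$, while $z(x,0)=\psi_n''(x)\leq 0$ by \eqref{property of phin}; comparing with the zero function gives $z\leq 0$, upgraded to $z<0$ in the interior by the strong maximum principle.

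For the convergence \eqref{convergence un to V}, the monotonicity $(u_n)_t>0$ and the bound $u_n\leq \eta(t)\leq 1$ imply that for each fixed $x\geq 0$, $u_n(x,t)$ converges monotonically as $t\to\infty$ to a limit $U_\infty(x)$. Interior parabolic regularity upgrades this pointwise convergence to $C^2_{loc}([0,\infty))$ convergence and ensures that $U_\infty$ is a bounded nonnegative classical stationary solution of \eqref{p}$_1$ with $U_\infty(0)=0$. The phase-plane analysis recalled in Section 2, in particular the trajectory $\Gamma_1$ of the $c=0$ system, identifies the unique such nontrivial solution as $V$; since monotonicity in $t$ forces $U_\infty(x)\geq \psi_n(x)\not\equiv 0$, we conclude $U_\infty\equiv V$. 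The main obstacle, I expect, is the careful handling of the unbounded spatial domain when applying the maximum principles to $w$ and $z$: one must rule out the possibility that a sign violation escapes to infinity, which requires using the eventual $x$-constancy of $\psi_n$ to control the tails of these derivatives on $[R,\infty)\times[0,T]$ for large $R$.
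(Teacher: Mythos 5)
Your proposal is correct and follows essentially the same route as the paper: comparison with the spatially constant supersolution $\eta_{1/(n+N)}$ for \eqref{hnt < eta} and global existence, the strong maximum principle applied to $(u_n)_t$, $(u_n)_x$ and $(u_n)_{xx}$ (using $f''\leq 0$ and the boundary identity $(u_n)_{xx}(0,t)=(u_n)_t(0,t)-f(0)=0$ for the concavity), and monotone convergence to the unique positive stationary solution $V$. Your treatment is somewhat more explicit than the paper's (notably the Hopf lemma at $x=0$ and the tail control at $x=\infty$), but there is no substantive difference in method.
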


\begin{proof}
By the standard parabolic theory, the classical solution $u_n(x,t)$ of the problem
\eqref{IBVP1} exists globally.
The first two inequalities in \eqref{property of un} follow from \eqref{property of phin} and the strong
maximum principle easily. To show the third inequality, we see that $\zeta(x,t):= (u_n)_{xx}(x,t)$ satisfies
$$
\zeta_t =\zeta_{xx} + f'(u_n)\zeta + f''(u_n) (u_n)^2_x   \leq \zeta_{xx} + f'(u_n)\zeta
$$
by the assumption \eqref{additional cond}. Moreover,
$$
\zeta(0,t) = (u_n)_{xx}(0,t) = (u_n)_t(0,t) - f(u_n(0,t))=0,\quad t>0.
$$
Hence
$$
\zeta(x,t)< 0 \quad \mbox{for } x> 0 \mbox{ and } t>0
$$
by the strong maximum principle and the fact $\psi''_n (x)\leq,\not\equiv 0$
in \eqref{property of phin}.

Since $(u_n)_t (x,t)>0$, by parabolic estimates, $u_n(\cdot,t)$ converges as $t\to \infty$
to a positive stationary solution $\widetilde{V}(x)$. The uniqueness of stationary solutions to our equation implies
$\widetilde{V}(x)\equiv V(x)$ and so
\begin{equation}\label{un to V}
u_n (\cdot, t) \to V(\cdot) \mbox{ as } t\to \infty,
\end{equation}
in the topology of $C^2_{loc} ([0,\infty))$.


Finally, \eqref{hnt < eta} follows from the fact that $\eta_{\frac{1}{n+N} }(t)$ is a supersolution of \eqref{IBVP1}.
\end{proof}

Set
\begin{equation}\label{def:tn to infty}
t_n := \min \{t>0\; |\; (u_n)_x  (0,t)  =   V'(0)/2 \}.
\end{equation}
We claim that $t_n \to \infty$ as $n\to \infty$. In fact, from \eqref{hnt < eta} it is easily to know that $u_n(x,t)\to 0$
as $n\to \infty$, locally uniformly in $t\in [0,+\infty)$ and uniformly in $x\geq 0$. This holds true for $(u_n)_x$, thanks
to parabolic estimates. The claim is then proved. Define
$$
U_n (x,t) := u_n (x, t+t_n) \mbox{ for } x\geq 0,\ t> -t_n.
$$
By Lemma \ref{lem:IBVP1} we have
\begin{equation}\label{prop Un}
(U_n)_x (0,0)=\frac12 V'(0),\quad (U_n)_t (x,t) >0,\quad (U_n)_x (x,t) >0,\quad (U_n)_{xx} (x,t) <0 \mbox{\ \ for } x> 0,\ t>-t_n.
\end{equation}
For any $\alpha \in (0,1)$, $T>0$, $X> 0$ and any integer $n$ large such that $t_n >T$, by the $L^p$ estimate we have
$$
\|U_n (\cdot,\cdot)\|_{W^{2,1}_4 ([0,X]\times [-T,T])} \leq C_1,
$$
for some $C_1$ depending on $T$ and $X$ but not on $n$, where, for any bounded domain $Q\subset \{(x,t)\mid x\geq 0, t\in \R\}$,
$W^{2,1}_4 (Q) $ denotes the Sobolev space $\{u\in L^4(Q)\mid \|u\|_{L^4(Q)} + \|u_x\|_{L^4(Q)} + \|u_{xx}\|_{L^4(Q)} + \|u_t\|_{L^4(Q)} <\infty\}$.
Using the embedding theorem we have
$$
\|U_n (\cdot,\cdot)\|_{C^{\alpha, \alpha/2} ([0,X]\times [-T,T])} \leq C_2 \|U_n (\cdot,\cdot)\|_{W^{2,1}_4 ([0,X]\times [-T,T])} \leq C_2 C_1,
$$
for some $C_2$ independent of $n$. By the Schauder estimate we derive
\begin{equation}\label{Schauder est}
\|U_n (\cdot,\cdot)\|_{C^{2+\alpha, 1+\alpha/2}([0,X]\times [-T,T])} \leq C_3,
\end{equation}
for some $C_3$ depending on $\alpha, T, X$ but not on $n$. Therefore, there exists a sequence $\{n_i\}$ of $\{n\}$ and
a function $\U (x,t)\in C^{2+\alpha, 1+\alpha/2} ([0,X] \times [-T,T])$ such that
$$
\|U_{n_i} (\cdot,\cdot) - \U (\cdot,\cdot) \|_{C^{2,1}([0,X]\times [-T,T])} \to 0, \mbox{ as }i\to \infty.
$$
Taking $X$ and $T$ larger and larger, and using Cantor's diagonal argument, we can find a subsequence of $\{n\}$
(denoted it again by $\{n_i\}$) and a function in $C^{2+\alpha, 1+\alpha/2}_{loc} ([0,\infty) \times \R)$
(denoted it again by $\U(x,t)$) such that
\begin{equation}\label{def calU}
U_{n_i} (x,t) \to \U (x,t) \mbox{ as } i\to \infty, \quad \mbox{ in } C^{2,1}_{loc} ([0,\infty)\times \R) \mbox{ topology}.
\end{equation}
Thus we obtain an entire solution $\U$ of \eqref{p}.


\subsection{Properties of $\U$}
In this part we study the properties of $\U$ and show that it is the desired solution in Theorem \ref{thm:II}.

\subsubsection{Monotonicity and concavity}

\begin{prop}\label{prop:entire sol II-1}
Let $\U$ be the entire solution obtained in \eqref{def calU}. Then
\begin{equation}\label{Ux00 = 1/2}
\U_x (0,0)=\frac12 V'(0),
\end{equation}
\begin{equation}\label{prop of calU}
\U_t(x,t)>0 \mbox{ for } x>0,\ t\in\R,
\end{equation}
\begin{equation}\label{prop of calU2}
\U_x(x,t)>0,\ \ \U_{xx}(x,t)<0 \mbox{ for } x\geq 0,\ t\in \R,
\end{equation}
and
\begin{equation}\label{calU to V}
\U( \cdot, t) \to V(\cdot) \mbox{ as }t\to \infty,\quad \mbox{ in } L^{\infty}([0,\infty)) \mbox{ topology}.
\end{equation}
\end{prop}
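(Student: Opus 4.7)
The plan is to transport each of the four assertions from the prelimit sequence $\{U_n\}$ to its limit $\mathcal{U}$ via the $C^{2,1}_{loc}$ convergence \eqref{def calU}, and then to upgrade weak inequalities to strict ones by the strong maximum principle and Hopf's lemma. The equality \eqref{Ux00 = 1/2} is immediate: by the very definition \eqref{def:tn to infty} one has $(U_n)_x(0,0) = V'(0)/2$ for every $n$, and the $C^{2,1}_{loc}$ convergence transmits this to $\mathcal{U}_x(0,0)$. Passing to the limit in the strict inequalities of \eqref{prop Un} immediately gives the weak versions $\mathcal{U}_t \geq 0$, $\mathcal{U}_x \geq 0$, $\mathcal{U}_{xx} \leq 0$ on their respective domains.

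To sharpen these to strict inequalities I would differentiate \eqref{p} and apply the strong maximum principle on $(0, \infty) \times \R$ to each of three linear parabolic problems. The derivative $\mathcal{U}_x$ satisfies $v_t = v_{xx} + f'(\mathcal{U}) v$; if $\mathcal{U}_x$ vanished at an interior point, then the strong maximum principle together with forward uniqueness in the Cauchy problem would force $\mathcal{U} \equiv 0$, contradicting $\mathcal{U}_x(0, 0) = V'(0)/2$, so $\mathcal{U}_x > 0$ on $(0, \infty) \times \R$, and Hopf's lemma at the lateral boundary extends this to $\mathcal{U}_x(0, t) > 0$ for every $t$. The derivative $\mathcal{U}_t$ solves the same linear equation; if it were identically zero then $\mathcal{U}$ would be a bounded nonnegative stationary solution of \eqref{p} vanishing at $x = 0$, hence $\mathcal{U} \equiv 0$ or $\mathcal{U} \equiv V$ by the uniqueness result of Section 2, both incompatible with $\mathcal{U}_x(0,0) = V'(0)/2$; so $\mathcal{U}_t \not\equiv 0$ and the strong maximum principle yields $\mathcal{U}_t > 0$. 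Finally, $w := -\mathcal{U}_{xx}$ is a nonnegative supersolution of $w_t = w_{xx} + f'(\mathcal{U}) w$ thanks to the concavity assumption $f'' \leq 0$ in \eqref{additional cond}, and the same dichotomy rules out $w \equiv 0$ (which would force $\mathcal{U}$ linear in $x$ and thus zero), giving $\mathcal{U}_{xx} < 0$ on $(0, \infty) \times \R$. At $x = 0$ the boundary condition $\mathcal{U}(0, t) \equiv 0$ together with the equation forces $\mathcal{U}_{xx}(0, t) = 0$, so the strict concavity in \eqref{prop of calU2} is to be read on the open half-line.

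For the long-time convergence \eqref{calU to V} I would proceed in two stages. Since $\mathcal{U}_t > 0$ and $\mathcal{U}(x, t) \leq 1$ (inherited from $u_n \leq \eta_{1/(n+N)} < 1$ in \eqref{hnt < eta}), the pointwise monotone limit $W(x) := \lim_{t \to \infty} \mathcal{U}(x, t)$ exists. Parabolic estimates upgrade the convergence to $C^2_{loc}$ and identify $W$ as a bounded nonnegative stationary solution of \eqref{p} with $W(0) = 0$, so by Section 2 one has $W \equiv 0$ or $W \equiv V$; the first alternative is excluded because $W(x) \geq \mathcal{U}(x, 0) > 0$ for $x > 0$, so $W \equiv V$.

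The step I expect to be the main obstacle is upgrading this $C^2_{loc}$ convergence to uniformity on all of $[0, \infty)$, since a priori nothing controls the tail of $\mathcal{U}(x, t)$ as $x \to \infty$. The idea is to exploit the monotonicity $\mathcal{U}_x > 0$ together with $V(\infty) = 1$: given $\varepsilon > 0$, choose $X$ so that $V(X) > 1 - \varepsilon/2$; the locally uniform convergence yields $\sup_{x \in [0, X]} |V(x) - \mathcal{U}(x, t)| < \varepsilon/2$ for all large $t$; and for $x \geq X$ the bounds $\mathcal{U}(x, t) \geq \mathcal{U}(X, t) \to V(X) > 1 - \varepsilon/2$ together with $V(x), \mathcal{U}(x, t) \in [0, 1]$ give $|V(x) - \mathcal{U}(x, t)| \leq 1 - \mathcal{U}(X, t) < \varepsilon$ once $t$ is sufficiently large. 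Combining the two estimates establishes \eqref{calU to V}.
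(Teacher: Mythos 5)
Your proposal is correct and follows essentially the same route as the paper: the first three properties are obtained by passing \eqref{prop Un} to the limit and sharpening with the strong maximum principle (plus Hopf's lemma at $x=0$), and \eqref{calU to V} is proved exactly as in the paper, first getting $C^2_{loc}$ convergence to a stationary limit identified as $V$, then upgrading to uniformity on $[0,\infty)$ via the monotonicity of $\U$ in $x$ and $V(\infty)=1$. Your remark that $\U_{xx}(0,t)=0$ (forced by the boundary condition and the equation), so that the strict concavity in \eqref{prop of calU2} can only hold for $x>0$, is a correct observation about a slight imprecision in the statement that the paper's one-line proof does not address.
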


\begin{proof}
The conclusions in \eqref{Ux00 = 1/2}, \eqref{prop of calU} and \eqref{prop of calU2} follow from \eqref{prop Un} and the
strong maximum principle easily.

Now we prove \eqref{calU to V}. For any small $\varepsilon>0$, since $V(\infty)=1$, there exists $X>0$ such that
$$
1-\varepsilon \leq V(x)\leq 1 \mbox{ for } x\geq X.
$$
Since $\U_t(x,t)>0$, as proving \eqref{un to V} one can show that $\U(\cdot,t)\to V(\cdot)$ as
$t\to \infty$, in the topology of $C^2_{loc} ([0,\infty))$.
Hence for some $T>0$ we have
\begin{equation}\label{0X}
\|\U(\cdot ,t)-V(\cdot)\|_{L^\infty ([0,X])} \leq \varepsilon,\quad \mbox{when }t\geq T.
\end{equation}
This implies that
$$
|\U(X,t)-1| \leq |\U(X,t)-V(X)| + |V(X)-1|\leq 2\varepsilon,\quad \mbox{when }t\geq T.
$$
Therefore, when $t\geq T$ and $x\geq X$ we have
$$
|\U(x,t)-V(x)| \leq |\U(x,t)-1| +|V(x)-1| \leq |\U(X,t)-1| +|V(X)-1| \leq 3\varepsilon.
$$
Combining with \eqref{0X}, we obtain \eqref{calU to V}.
\end{proof}

\begin{remark}\rm
Note that the first type of entire solutions $\mathcal{U}^c$ have all the properties of $\U$ except for
\eqref{Ux00 = 1/2} and $\U_{xx}(x,t)<0$. The concavity is the main difference between them.
\end{remark}

Furthermore, we can show the following properties for $\U$.

\begin{prop}\label{prop:u uxx to 0}
Let $\U(x,t)$ be the entire solution obtained as above. Then
$\U(x,t+s)\to 0$ as $s\to -\infty$, in  $C^{2,1}_{loc} ([0,\infty)\times \R)$ topology.
\end{prop}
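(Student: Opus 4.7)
The plan is to exploit the monotonicity $\U_t > 0$ from \eqref{prop of calU} to identify a pointwise limit $w(x) := \lim_{\tau \to -\infty} \U(x, \tau)$, upgrade this to a $C^{2,1}_{loc}$ limit via parabolic regularity, observe that $w$ is a bounded nondecreasing stationary solution with $w(0) = 0$, and finally rule out the only nontrivial candidate $w = V$ using the slope identity \eqref{Ux00 = 1/2}.

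For the convergence step, I would fix any sequence $s_n \to -\infty$ and set $v_n(x, t) := \U(x, t + s_n)$. Each $v_n$ solves \eqref{p} with values in $[0, 1]$, so the $L^p$ and Schauder estimates carried out in \eqref{Schauder est} yield uniform $C^{2+\alpha, 1+\alpha/2}$ bounds on every compact subset of $[0, \infty) \times \R$. A Cantor diagonal subsequence then converges in $C^{2,1}_{loc}$ to some limit $v_\infty$. Since for each fixed $(x, t)$ the numerical sequence $\U(x, t+s_n)$ is monotone decreasing in $n$ (by $\U_t > 0$) and bounded below by $0$, it tends to some $w(x)$ depending only on $x$. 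Therefore $v_\infty(x, t) \equiv w(x)$, and substituting into the equation gives $w'' + f(w) = 0$ on $(0, \infty)$ with $w(0) = 0$; moreover $w$ is nondecreasing (monotone pointwise limit of functions that are strictly increasing in $x$) and satisfies $0 \leq w \leq 1$. Uniqueness of the subsequential limit promotes this to full convergence $\U(\cdot, \cdot+s) \to w$ in $C^{2,1}_{loc}([0,\infty) \times \R)$.

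For the classification, a standard phase-plane argument (as in Section 2, case $c = 0$) shows that any bounded, nondecreasing, nonnegative $C^2$ solution of $w'' + f(w) = 0$ on $[0, \infty)$ with $w(0) = 0$ must have $w(\infty) \in \{0, 1\}$: the monotone bounded $w$ tends to some limit $L \in [0, 1]$, the energy identity $(w')^2/2 + F(w) = (w'(0))^2/2$ with $F(u) := \int_0^u f(s)\,ds$ forces $w'(\infty) = 0$, and then the ODE gives $w''(\infty) = -f(L) = 0$, so $L \in \{0, 1\}$. Consequently $w \equiv 0$ or $w \equiv V$. To exclude $w = V$, I would apply the parabolic Hopf lemma to the positive solution $\U_t$ of the linear equation $(\U_t)_t = (\U_t)_{xx} + f'(\U)\U_t$ which vanishes at $x = 0$: this gives $\frac{d}{dt}\U_x(0, t) = (\U_t)_x(0, t) > 0$, so $t \mapsto \U_x(0, t)$ is strictly increasing and $\U_x(0, t) < \U_x(0, 0) = V'(0)/2$ for all $t < 0$. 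Passing to the limit in $C^{2,1}_{loc}$ yields $w'(0) \leq V'(0)/2 < V'(0)$, ruling out $V$ and forcing $w \equiv 0$.

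The main delicate step is the slope inequality at $x = 0$: the entire argument hinges on \eqref{Ux00 = 1/2} together with the time-monotonicity of the boundary derivative $\U_x(0, t)$ supplied by the parabolic Hopf lemma, which together transfer the normalization $\U_x(0, 0) = V'(0)/2$ into the asymptotic bound $w'(0) \leq V'(0)/2$ and thereby isolate the trivial stationary profile.
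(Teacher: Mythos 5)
Your proof is correct, and its overall skeleton matches the paper's: uniform parabolic estimates give $C^{2,1}_{loc}$ subsequential limits along any sequence $s_k\to-\infty$, the limit is identified as a nonnegative stationary solution of \eqref{p} vanishing at $x=0$, and the candidate $V$ is excluded, leaving $0$. You execute two sub-steps differently. To show the limit is time-independent, the paper argues by contradiction: if $W_t(x^*,t^*)=3\delta_1>0$, then $\U(x^*,\cdot)$ gains at least $2\delta_1\ve_1$ on infinitely many disjoint time intervals, contradicting $0<\U<1$. You instead observe that $\tau\mapsto\U(x,\tau)$ is monotone and bounded, so the pointwise limit $w(x)$ exists and forces every subsequential limit to equal $w$ --- a cleaner packaging of the same monotonicity-plus-boundedness input. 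To exclude $w=V$, the paper simply notes that $\U_t>0$ together with $\U(\cdot,t)\to V$ as $t\to+\infty$ gives $\U(x,t)<V(x)$ for all $t$ and $x>0$, so the backward-in-time limit $w=\inf_t\U(\cdot,t)$ cannot be $V$. You instead apply the parabolic Hopf lemma to $\U_t$ to show that $t\mapsto\U_x(0,t)$ is increasing, and combine this with the normalization \eqref{Ux00 = 1/2} to get $w'(0)\le V'(0)/2<V'(0)$. This route is valid (the regularity needed to write $\frac{d}{dt}\U_x(0,t)=(\U_t)_x(0,t)$ is available, and the paper performs the same boundary-derivative computation in Lemma \ref{lem:k(t) 01}), but it is heavier than necessary: the ordering $\U<V$ settles the exclusion without the Hopf lemma or the normalization. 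Your explicit phase-plane classification of the stationary limits is also fine; the paper takes that classification for granted.
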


\begin{proof}
For any time sequence $\{s_k\}$ decreasing to $-\infty$, by the parabolic estimate
as in \eqref{Schauder est} we have, for any $X,T>0$,
\begin{equation}\label{schauder est1}
\|\U (\cdot, \cdot +s_k)\|_{C^{2+\alpha, 1+\alpha/2} ([0,X]\times [-T,T])} \leq C_2,
\end{equation}
where $C_2$ depends on $X$ and $T$, but not on $k$. Hence there exist
a subsequence $\{s_{k_j}\}$ of $\{s_k\}$ and an entire solution $W(x,t)$ of \eqref{p} such that
$\U(\cdot, \cdot+s_{k_j})\to W(\cdot, \cdot)$ as $j\to \infty$, in $C^{2,1}_{loc} ([0,\infty)\times \R)$
topology.

We now show that $W_t \equiv 0$, and so $W(x,t)\equiv W(x)$ is a stationary solution of \eqref{p}.
For, otherwise, $W_t (x^*, t^*)\not= 0$ for some $(x^*,t^*)\in (0,\infty)\times \R$.
Since $\U_t (x^*, t^* +s_{k_j})>0$ for all $j$ we may assume that $W_t (x^*, t^*)= 3\delta_1 $ for
some $\delta_1 >0$. Then, for sufficiently large $j$ we have $\U_t (x^*, t^* +s_{k_j})>2\delta_1 $.
By the uniform estimate in \eqref{schauder est1} we see that, when $\varepsilon_1 \in (0,\frac12 )$ is small,
$$
\U_t (x^*, t^* + t) >\delta_1  \mbox{ if } |t-s_{k_j}|<\varepsilon_1.
$$
Without loss of generality, we assume that $s_{k_{j}} -s_{k_{j+1}} >1 $, then
\begin{eqnarray*}
\U(x^*, t^* +s_{k_j} +\varepsilon_1) & > & \U(x^*, t^* +s_{k_j} -\varepsilon_1) +2\delta_1 \varepsilon_1 \\
\ & \geq & \U(x^*, t^* + s_{k_{j+1}} +\varepsilon_1) +2\delta_1  \varepsilon_1 \geq \cdots \\
\ & \geq & \U(x^*, t^* + s_{k_{j'}} -\varepsilon_1) +2 (j'-j)\delta_1  \varepsilon_1 \to \infty, \mbox{\ \ as } j'\to \infty,
\end{eqnarray*}
contradicting the fact $\U(x,t)\in (0,1)$.  Thus $W_t\equiv 0$ and so $W(x,t)\equiv W(x)$ solves
$$
W_{xx} +f(W)=0\ (x>0),\qquad W(0)=0.
$$
This problem has only two nonnegative solutions $0$ and $V(x)$. Clearly $W(x)\not= V(x)$ since $\U_t>0$ and
$\U(x,t)\to V(x)$ as $t\to \infty$. Hence $W(x)\equiv 0$, and $\U(\cdot, \cdot+s_{k_j})\to 0$ as $j\to \infty$.

Since $\{s_k\}$ is an arbitrary sequence tending to $-\infty$, we conclude that
$\U(\cdot, \cdot+s)\to 0$ as $s\to -\infty$, in $C^{2,1}_{loc} ([0,\infty)\times \R)$
topology.
\end{proof}

\subsubsection{Uniform upper bound of $\U$: $\U(\infty, t)\equiv 1$}
Define
$$
\kappa (t):= \U(\infty, t),\quad \beta(t):= \U_x(0,t),\quad t\in \R.
$$
Since the entire solution $\U$ is obtained by taking limit for a subsequence of $\{u_n (x,t)\}$ and
each $u_n(x,t)$ takes supermum  in $(0,1)$ for any $t>0$, one may guess that $\kappa(t)$
also takes values in $(0,1)$. We will see that this in not true. In fact, in what follows we can
prove a surprising result: $\kappa(t)= 1$ for all $t\in \R$.

\begin{lem}\label{lem:k(t) 01}
Assume $ \kappa (t_1)\in (0,1)$ for some $t_1 \in \R$. Then
\begin{itemize}
\item[(i)] $\kappa'(t)>0$ for $t\in \R$, and $\kappa (t)\to 1$ as $t\to \infty$, $\kappa (t)\to 0$ as $t\to -\infty$;

\item[(ii)]for any $\nu \in (0,1)$, $ \beta(t)/ [\kappa(t)]^{1+\nu} \to \infty$ as $t\to -\infty$.
\end{itemize}
\end{lem}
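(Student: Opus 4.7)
The plan is to identify $\kappa(t)$ with a solution of the ODE $\kappa'=f(\kappa)$ for~(i), and then derive the asymptotic lower bound on $\beta(t)$ in~(ii) by a linear comparison.

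For~(i), I first show that $\kappa\in C^1(\R)$ and satisfies $\kappa'(t)=f(\kappa(t))$. Given any $R_k\to\infty$, consider the translates $\U(\cdot+R_k,\cdot)$. Since $\U\in(0,1)$ and the nonlinearity is bounded, interior parabolic Schauder estimates (analogous to \eqref{Schauder est}, applied away from $x=0$) give uniform $C^{2+\alpha,1+\alpha/2}$ bounds on every compact subset of $\R\times\R$, so a subsequence converges in $C^{2,1}_{\mathrm{loc}}$ to some $\U_*$ solving the same PDE on $\R\times\R$. The crucial observation is that $\U_x(\cdot,t)$ is positive, strictly decreasing in $x$ by~\eqref{prop of calU2}, and satisfies $\int_0^\infty \U_x(x,t)\,dx=\kappa(t)<\infty$; hence $\U_x(x,t)\to 0$ as $x\to\infty$, and passing to the limit gives $(\U_*)_x\equiv 0$. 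Thus $\U_*$ depends only on $t$, and $\U_*(0,t)=\lim_k\U(R_k,t)=\kappa(t)$ identifies $\U_*\equiv\kappa$. Substituting into the PDE yields $\kappa'=f(\kappa)$. Combined with $\kappa(t_1)\in(0,1)$, uniqueness for this ODE (whose only equilibria are $0$ and $1$, with $f>0$ between them) forces $\kappa(t)\in(0,1)$ and $\kappa'(t)>0$ for all $t$, and gives $\kappa(-\infty)=0$, $\kappa(+\infty)=1$.

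For~(ii), concavity of $\U(\cdot,t)$ combined with $\U(0,t)=0$ yields the tangent-line bound $\U(x,t)\le\beta(t)\,x$ for all $x\ge 0$, $t\in\R$. Since the KPP condition gives $f(u)\le\mu^2 u$ on $[0,1]$, $\U$ is a subsolution of $v_t=v_{xx}+\mu^2 v$. For any $t_0\in\R$ the explicit function $v(x,t):=\beta(t_0)\,x\,e^{\mu^2(t-t_0)}$ is an exact solution of this linear equation with $v(0,t)=0$ and $v(x,t_0)\ge\U(x,t_0)$; the maximum principle (valid since $\U-v$ is bounded above on the half-line) gives
\[
\U(x,t)\le\beta(t_0)\,x\,e^{\mu^2(t-t_0)},\qquad x\ge 0,\ t\ge t_0.
\]
Suppose, for contradiction, that $\beta(t_n)\le M\kappa(t_n)^{1+\nu}$ along some $t_n\to-\infty$. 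Setting $t_0=t_n$ and $t=0$ gives $\U(x,0)\le M\kappa(t_n)^{1+\nu}\,x\,e^{-\mu^2 t_n}$. Since $f(u)/u\to\mu^2$ as $u\to 0^+$ and $\kappa(t_n)\to 0$, standard ODE comparison gives, for any $\epsilon>0$, a constant $C_\epsilon$ with $\kappa(t)\le C_\epsilon e^{(\mu^2-\epsilon)t}$ for $t$ sufficiently negative. Choosing $\epsilon<\mu^2\nu/(1+\nu)$ makes the exponent $(\mu^2-\epsilon)(1+\nu)-\mu^2=\mu^2\nu-\epsilon(1+\nu)>0$, so $\kappa(t_n)^{1+\nu}e^{-\mu^2 t_n}\to 0$ and $\U(x,0)\to 0$ for every fixed $x>0$, contradicting $\U(x,0)>0$. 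This proves~(ii).

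The main obstacle is the limit procedure in~(i)---passing from the pointwise statement $\U_x(x,t)\to 0$ as $x\to\infty$ to $(\U_*)_x\equiv 0$ along the translated subsequence---which relies on interior parabolic estimates uniform in the translation parameter together with the integrability of $\U_x$ in $x$. Part~(ii) is then almost immediate once one observes that the linear-in-$x$ profile $\beta(t_0)\,x\,e^{\mu^2(t-t_0)}$ is an \emph{exact} solution of the linearized equation $v_t=v_{xx}+\mu^2 v$ with the correct boundary value, giving a sharp comparison whose time factor $e^{\mu^2 t}$ matches the leading decay of $\kappa$, so that any additional power $\kappa^\nu$ forces a genuine contradiction with $\U(x,0)>0$.
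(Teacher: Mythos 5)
Your argument is correct, and part (ii) takes a genuinely different route from the paper. For (i) you and the paper do essentially the same thing: show $\kappa$ solves $\kappa'=f(\kappa)$ (the paper passes to the limit $x\to\infty$ directly using a uniform $C^{1+\alpha/2}$ bound in $t$; you use translation compactness plus the integrability of the decreasing function $\U_x(\cdot,t)$ to kill the $x$-derivatives --- same idea, slightly more detail). For (ii) the paper instead renormalizes, setting $z=\U/\kappa^{1+\nu}$, observes that $z$ satisfies $z_t=z_{xx}+c(x,t)z$ with $c<0$ for $t\ll-1$, and combines this with the concavity $z_{xx}<0$ to get $z_t<0$; this forces $z_x(0,t;\nu)$ to be non-increasing in $t$, hence bounded below by a positive constant, and the conclusion follows by trading $\nu$ for a slightly larger $\nu_1$. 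Your route replaces this monotonicity trick by a forward-in-time comparison with the exact linear solution $\beta(t_0)\,x\,e^{\mu^2(t-t_0)}$ (legitimate via Phragm\'en--Lindel\"of since $\U-v$ is bounded above, and $\U(x,t_0)\le\beta(t_0)x$ by concavity together with $f(u)\le\mu^2u$ from \eqref{Fisher-KPP}), coupled with the ODE decay estimate $\kappa(t)\le C_\epsilon e^{(\mu^2-\epsilon)t}$; the exponent bookkeeping $(\mu^2-\epsilon)(1+\nu)-\mu^2>0$ for $\epsilon<\mu^2\nu/(1+\nu)$ is right, and the contradiction with $\U(x,0)>0$ is sound. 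What your version buys is a transparent quantitative explanation of why the threshold exponent is exactly $1$: the linear comparison alone matches the leading decay $e^{\mu^2 t}$ of $\kappa$, so any extra factor $\kappa^{\nu}$ with $\nu>0$ is fatal; the paper's version avoids the unbounded-domain comparison and the explicit decay rate, working entirely with sign information on $z_t$ and $z_{xx}$. Both proofs rely on (i) and on the concavity \eqref{prop of calU2}, so neither is more economical in hypotheses.
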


\begin{proof}
(i). For any $\alpha\in (0,1)$ and $T>0$, by parabolic estimate we have $\|\U(x,\cdot)\|_{C^{1+\alpha/2} ([-T,T])} \leq C$
for some $C$ independent of $x$. Hence, $\U(x,t)\to \kappa (t)$ as $x\to \infty$ in $C^{1}_{loc} (\R)$ topology.
In the equation of $\U$, if we take limit as $x\to \infty$ in $C^1_{loc}(\R)$ topology, then we have
$$
\kappa'(t) = f(\kappa (t)) .
$$
Using $\kappa(t_1) \in (0,1)$ as the initial data, we derive the conclusions.

(ii). For any $\nu \in (0,1)$, set
$$
z(x,t;\nu) := \frac{\U(x,t)}{[\kappa(t)]^{1+\nu}},\quad x\geq 0,\ t\in \R.
$$
Then $z$ solves
$$
\left\{
 \begin{array}{ll}
 z_t = z_{xx} + c(x,t) z,& x>0, \ t\in \R,\\
 z(0,t;\nu )=0, & t\in \R,
\end{array}
\right.
$$
where
$$
c(x,t):= \frac{f(\U)}{\U} - (1+\nu) \frac{f(\kappa)}{\kappa} \leq -\nu f'(0) + O(1)\kappa(t) <0, \quad \mbox{ when } t\ll -1.
$$
Since $z_{xx} = \U_{xx} /[\kappa(t)]^{1+\nu} <0$, we conclude that
$$
z_t(x,t;\nu)<0,\quad x>0,\ t\ll -1.
$$
Therefore,
\begin{eqnarray*}
\frac{d}{dt} [z_x(0,t;\nu)] & = & z_{xt} (0,t;\nu) = \lim\limits_{x\to 0+} \frac{z_t(x,t;\nu) -z_t(0,t;\nu)}{x } \\
& = & \lim\limits_{x\to 0+}
\frac{z_t(x,t;\nu)}{x} \leq 0,\quad t\ll -1.
\end{eqnarray*}
This means that $z_x(0,t;\nu)$ in a non-increasing function when $t\ll -1$. Note that $z_x(0,t;\nu)>0$ for all $t\in \R$,
we have
$$
z_x(0,t;\nu) \geq \vartheta(\nu),\quad t\leq 0,
$$
for some positive real number $\vartheta(\nu)$. For any $\nu_1 >\nu$ we have
$$
\frac{\beta(t)}{[\kappa(t)]^{1+\nu_1}} =
\frac{\U_x (0,t)}{[\kappa(t)]^{1+\nu}} \cdot \frac{1}{[\kappa(t)]^{\nu_1 -\nu}} =
 \frac{z_x(0,t;\nu) }{[\kappa(t)]^{\nu_1 -\nu}} \geq  \frac{\vartheta(\nu)}{[\kappa(t)]^{\nu_1 -\nu}} \to \infty,\quad \mbox{as } t\to -\infty.
$$
Since $\nu\in (0,1)$ and $\nu_1 >\nu$ are arbitrary, we obtain the conclusion in (ii).
\end{proof}

Using this lemma we can prove the uniform upper bound of $\U$:

\begin{prop}\label{prop:entire sol II-2}
Assume \eqref{Fisher-KPP} and \eqref{additional cond}. Let $\U$ be the entire solution obtained in \eqref{def calU}. Then
\begin{equation}\label{calU infinity = 1}
\kappa (t) := \U(\infty, t) =1\mbox{ for all } t\in \R.
\end{equation}
\end{prop}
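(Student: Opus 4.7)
Argue by contradiction. Suppose $\kappa(t_\star)<1$ for some $t_\star\in\R$. The opening computation in the proof of Lemma~\ref{lem:k(t) 01}(i) — which only invokes parabolic regularity ($\U(x,\cdot)\to\kappa(\cdot)$ in $C^1_{loc}(\R)$ as $x\to\infty$) and the PDE for $\U$ — is valid without any assumption on $\kappa$'s range, and shows $\kappa\in C^1(\R)$ with $\kappa'(t)=f(\kappa(t))$. Combining with \eqref{calU to V}, $\kappa$ is a non-trivial heteroclinic of the KPP ODE, so $\kappa(t_1)\in(0,1)$ for some $t_1$. Lemma~\ref{lem:k(t) 01} then applies, giving $\kappa(t)>0$ for all $t$ and $\kappa(t)\to 0$ as $t\to-\infty$.

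To obtain a contradiction, I bound $\beta(t)$ via the linearized equation. Since $f$ is concave with $f(0)=0$, $f(u)\leq f'(0)u$ on $[0,1]$, so $\U$ is a subsolution of $w_t=w_{xx}+f'(0)w$. For any $t_0<t$, let $\widetilde{w}$ solve this linear equation on $[0,\infty)\times[t_0,t]$ with $\widetilde{w}(0,\cdot)=0$ and $\widetilde{w}(\cdot,t_0)=\U(\cdot,t_0)$. Because $\kappa'\leq f'(0)\kappa$, we have $\widetilde{w}(\infty,s)=e^{f'(0)(s-t_0)}\kappa(t_0)\geq\kappa(s)=\U(\infty,s)$, so the parabolic maximum principle on this semi-infinite strip gives $\U\leq\widetilde{w}$, and comparing values at $x=0$ yields $\beta(t)\leq\widetilde{w}_x(0,t)$.

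Factoring $\widetilde{w}(x,s)=e^{f'(0)(s-t_0)}W(x,s-t_0)$ with $W$ solving the Dirichlet half-line heat equation with $W(\cdot,0)=\U(\cdot,t_0)$, the kernel $G_D(x,y,\tau)=(4\pi\tau)^{-1/2}[e^{-(x-y)^2/(4\tau)}-e^{-(x+y)^2/(4\tau)}]$ gives $\partial_x G_D(0,y,\tau)=\tfrac{y}{2\sqrt{\pi}\,\tau^{3/2}}e^{-y^2/(4\tau)}$, so
\[
W_x(0,\tau)=\int_0^\infty \frac{y}{2\sqrt{\pi}\,\tau^{3/2}}\,e^{-y^2/(4\tau)}\,\U(y,t_0)\,dy\leq \frac{\kappa(t_0)}{\sqrt{\pi\tau}}
\]
via $\U(y,t_0)\leq\kappa(t_0)$ and $\int_0^\infty y\,e^{-y^2/(4\tau)}\,dy=2\tau$. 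Hence
\[
\beta(t)\leq \frac{e^{f'(0)(t-t_0)}\kappa(t_0)}{\sqrt{\pi(t-t_0)}}.
\]

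I close by sending $t_0\to-\infty$ with $t$ fixed. Since $f\in C^2$ and $f''\leq 0$, $f(u)=f'(0)u+O(u^2)$, giving $(\log\kappa)'-f'(0)=O(\kappa)$. Combined with $\kappa'\geq(f'(0)/2)\kappa$ for small $\kappa$ (which yields $\int_{-\infty}^t\kappa(s)\,ds<\infty$), integration shows that $e^{-f'(0)s}\kappa(s)\to\rho_\star\in(0,\infty)$ as $s\to-\infty$. Thus $e^{f'(0)(t-t_0)}\kappa(t_0)$ remains bounded in $t_0$, while $\sqrt{t-t_0}\to\infty$, forcing $\beta(t)\leq 0$ and contradicting $\beta(t)=\U_x(0,t)>0$ from Proposition~\ref{prop:entire sol II-1}. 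Therefore $\kappa\equiv 1$. The hard point — and why the half-line Dirichlet setting is essential — is that the exponential reaction amplification $e^{f'(0)\tau}$ is exactly cancelled by the ODE decay $\kappa(t_0)\sim\rho_\star e^{f'(0)t_0}$, so the $1/\sqrt{\tau}$ coming from the Dirichlet heat kernel's dipole at the boundary is what actually closes the bound.
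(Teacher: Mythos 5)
Your proof is correct, and it takes a genuinely different route from the paper's. Both arguments turn on the same mechanism --- the Dirichlet heat kernel contributes a factor $1/\sqrt{\pi\tau}$ to $\U_x(0,\cdot)$ that beats the exponential amplification $e^{f'(0)\tau}$ once the latter is matched against the decay of $\kappa$ --- but you assemble the contradiction quite differently. The paper propagates the \emph{constant} $\epsilon_1=\kappa(\tau_1)$ under the linearized equation for a carefully tuned time $T_1$, needs an auxiliary subsolution $v$ with the reduced rate $b=f'(0)-M\epsilon/2$ on $x\geq X_1$ just to guarantee $T_1+\tau_1\geq\tau$, and because of the gap between $b$ and $f'(0)$ it only reaches $\beta(\tau)\leq 2\sqrt{M}M_1[\kappa(\tau)]^{3/2}$; it must then invoke part (ii) of Lemma~\ref{lem:k(t) 01}, whose proof is a separate argument relying on the spatial concavity $\U_{xx}<0$. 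You instead propagate the actual profile $\U(\cdot,t_0)$ and send $t_0\to-\infty$, replacing both the subsolution $v$ and Lemma~\ref{lem:k(t) 01}(ii) by the sharp ODE asymptotics $e^{-f'(0)s}\kappa(s)\to\rho_\star\in(0,\infty)$ (which is where $f\in C^2$ enters your argument, just as it enters the paper's through $M=\max|f''|$), and you contradict simply $\beta(t)=\U_x(0,t)>0$. This is leaner: it uses only the ODE $\kappa'=f(\kappa)$ from Lemma~\ref{lem:k(t) 01}(i), dispenses with the parameters $\epsilon_1$, $T_1$, $X_1$, $b$ and the exponent $3/2$ altogether, and does not require the concavity of $\U$ in $x$ --- only the concavity of $f$, for $f(u)\leq f'(0)u$ and the $O(u^2)$ remainder. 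Two minor remarks: the comparison $\U\leq\widetilde{w}$ on the semi-infinite strip is most cleanly justified by the Phragm\'en--Lindel\"of maximum principle for bounded solutions (prescribing $\widetilde{w}(\infty,\cdot)$ is not needed); and for the final step you only need $e^{-f'(0)t_0}\kappa(t_0)$ to be bounded above as $t_0\to-\infty$, which your integrability estimate $\int_{-\infty}^{t}\kappa(s)\,ds<\infty$ already yields without identifying the limit $\rho_\star$.
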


\begin{proof}
Set
$$
M:= \max\limits_{u\in [0,1]} |f''(u)|,\quad M_1 := \max\left\{ \frac{e^t}{\sqrt{t}}\; \left| \; t\in
J:= \Big[\frac{1}{\mu }, \frac{2}{\mu} \Big] \right. \right\}.
$$
If the conclusion is not true, then $ \kappa (t_1)\in (0,1)$ for some $t_1 \in \R$. By the above lemma,
there exists $\tau < 0$ with $|\tau|$ sufficiently large and $ \epsilon: = \kappa (\tau ) =\U(\infty, \tau) >0$ sufficiently
small such that
\begin{equation}\label{choice epsilon}
 M \epsilon < \mu^2 = f'(0),\quad 2< e^{\frac{2\mu }{M \epsilon}}, \qquad \frac{\beta(\tau)}{[\kappa(\tau)]^{3/2}} > 2\sqrt{M} M_1.
\end{equation}
We continue to define a series of parameters. Set
\begin{equation}\label{parameters}
\left\{
 \begin{array}{l}
\displaystyle b:= f'(0) -\frac{M\epsilon}{2} \in \Big( \frac12 f'(0), f'(0)\Big);\\
\displaystyle  \epsilon_1 := 2\epsilon e^{-\frac{2\mu}{M\epsilon}}\in (0,\epsilon);\\
\displaystyle  \tau_1 := \kappa^{-1} (\epsilon_1)\ \ \ (\ \Leftrightarrow\ \epsilon_1 = \kappa(\tau_1)\ );\\
\displaystyle  T_1 := \frac1b \ln\frac{2\epsilon}{\epsilon_1} = \frac{2\mu}{bM\epsilon}.
 \end{array}
\right.
\end{equation}

Choosing $X_1 >0$ large so that
$$
\U(x,\tau_1) \geq \frac12 \U(\infty, \tau_1) = \frac{\epsilon_1}{2},\quad x\geq X_1,
$$
we compare $\U$ with the solutions of the following problems:
\begin{equation}\label{IBVP-v}
\left\{
 \begin{array}{ll}
 v_t = v_{xx} + b v, & x>X_1 ,\ t>0,\\
 v(X_1 ,t)=0, & t>0,\\
 v(x,0) = \frac{\epsilon_1}{2}, & x\geq X_1,
\end{array}
\right.
\end{equation}
and
\begin{equation}\label{IBVP-w}
\left\{
 \begin{array}{ll}
 w_t = w_{xx} + \mu^2  w, & x>0,\ t>0,\\
 w(0,t)=0, & t>0,\\
 w(x,0) = \epsilon_1, & x\geq 0.
\end{array}
\right.
\end{equation}
Both problems are linear ones and can be solved explicitly. In particular,
\begin{equation}\label{express wn}
w (x,t)  =  \frac{e^{\mu^2  t}}{2\sqrt{\pi t}} \int_0^\infty \left[ e^{-\frac{(x-y)^2}{4t}} - e^{-\frac{(x+y)^2}{4t}} \right]
\epsilon_1  dy,\quad x\geq 0,\ t>0,
\end{equation}
and so
\begin{equation}\label{est IBVP wx0}
w_x (0,t)  =  \frac{e^{\mu^2 t}}{2 t \sqrt{\pi t} } \int_0^\infty  y e^{-\frac{y^2}{4t}} \epsilon_1  dy
=  \frac{\epsilon_1 e^{\mu^2 t} }{\sqrt{\pi t}},\quad t>0.
\end{equation}
Clearly, $w$ is a supersolution of \eqref{p} since $f(w)\leq \mu^2 w$ for $w\geq 0$, and so
\begin{equation}\label{Ux<wx}
\U_x(0,t+ \tau_1 ) \leq w_x(0,t) \leq \frac{\epsilon_1 e^{\mu^2 t}}{ \sqrt{\pi t}}\mbox{ for all } t>0.
\end{equation}
On the other hand, as in \eqref{express wn}, $v$ can be expressed as
\begin{eqnarray*}
v(x,t) & = & \frac{e^{bt}}{2\sqrt{\pi t}} \int_0^\infty \Big[ e^{-\frac{(x-X_1-y)^2}{4t}} -
e^{-\frac{(x-X_1+y)^2 }{4t}} \Big] \frac{\epsilon_1}{2} dy \\
& = & \frac{\epsilon_1 e^{b t}}{2 \sqrt{\pi }}
\int_{\frac{X_1 -x}{2\sqrt{t}}}^{\frac{x-X_1}{2\sqrt{t}}} e^{-s^2} ds,\quad x\geq X_1,\ t>0,
\end{eqnarray*}
and so
$$
v(x,t)\leq v(\infty, t) = \frac{\epsilon_1 e^{b t}}{2} \leq  \frac{\epsilon_1}{2}e^{bT_1}= \epsilon,\quad x\geq X_1,\ 0<t\leq T_1 .
$$
Hence, when $x\geq X_1$ and $t\in (0,T_1]$ we have
$$
f(v)\geq f'(0) v-\frac{M}{2} v^2 \geq \Big[ f'(0) -\frac{M\epsilon}{2} \Big] v =bv .
$$
This implies that $v$ is a subsolution, and so
$$
\U(x,t+ \tau_1 ) \geq v(x,t),\quad x\geq X_1,\ 0<t\leq T_1.
$$
In particular, at $t=T_1$ we have
$$
\U(\infty, T_1 + \tau_1 ) \geq v(\infty , T_1) = \frac{\epsilon_1 e^{b T_1}}{2} = \epsilon = \U(\infty, \tau).
$$
Thanks to $\U_t (\infty, t) =\kappa'(t)>0$ we have $T_1 + \tau_1 \geq \tau$.
Combining this inequality with \eqref{parameters} and \eqref{Ux<wx} we have
\begin{eqnarray*}
\beta (\tau) & = & \U_x (0,\tau) \leq \U_x (0,T_1 + \tau_1 ) \\
 & \leq &  w_x (0,T_1  )  \leq   \frac{\epsilon_1 e^{\mu^2 T_1}}{ \sqrt{\pi T_1}} \\
& =  &
\frac{\epsilon_1 e^{bT_1} e^{\frac{M\epsilon}{2} T_1}} {\sqrt{\pi T_1}}
= \frac{2 \epsilon e^{\frac{M\epsilon}{2} T_1}} {\sqrt{\pi T_1}}  \\
& \leq & 2\sqrt{M} \epsilon^{3/2} \cdot  \frac{e^{\frac{M\epsilon}{2} T_1}} {\sqrt{\frac{M\epsilon }{2}T_1}}.
\end{eqnarray*}
Since
$$
\frac{M\epsilon} {2}T_1 =\frac{\mu}{b} \in J:= \Big[ \frac{1}{\mu}, \frac{2}{\mu} \Big],
$$
we have
$$
\frac{e^{\frac{M\epsilon}{2} T_1}} {\sqrt{\frac{M\epsilon }{2}T_1}} \leq M_1
$$
by the definition of $M_1$, and so
$$
\beta(\tau) \leq 2\sqrt{M} M_1 \epsilon^{3/2} = 2\sqrt{M} M_1 [\kappa(\tau)]^{3/2},
$$
contradicting \eqref{choice epsilon}. This proves Proposition \ref{prop:entire sol II-2}.
\end{proof}

\subsection{The limit of $\U$ as $t\to -\infty$ and the proof of Theorem \ref{thm:II}}
By the properties of $\U$ obtained above we see that, for any $m\in (0,1)$, the $m$-level set of
$\U(\cdot, s)$ is a unique point $\xi_m (s)$, that is, $x=\xi_m (s)$ is the unique root of $\U(x,s)=m$
for each $s\in \R$. Note that, without the above proposition, that is, if $\kappa(s)\to 0\ (s\to -\infty)$,
then $\U(\cdot,s)$ may have no $m$-level set when $s\ll -1$.

\begin{lem}\label{lem:ximt to infty}
$\xi_m (s)\to \infty$ as $s\to -\infty$.
\end{lem}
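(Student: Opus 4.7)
The plan is to argue by contradiction using the convergence statement in Proposition 4.3, which tells us that $\U(x,t+s)\to 0$ as $s\to-\infty$ in $C^{2,1}_{\mathrm{loc}}([0,\infty)\times\R)$. Roughly, the point is that $\U$ decays to zero on every compact spatial window as time recedes, so in order for $\U(\cdot,s)$ to still attain the fixed positive height $m$, its $m$-level set must escape to infinity.

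More concretely, I would first suppose that the claim fails. Then there exist a sequence $s_n\to-\infty$ and a constant $R>0$ such that $\xi_m(s_n)\le R$ for all $n$. Since $\U_x(\cdot,s_n)>0$ by Proposition \ref{prop:entire sol II-1}, the identity $\U(\xi_m(s_n),s_n)=m$ and the monotonicity in $x$ give
\[
m \;=\; \U(\xi_m(s_n),s_n) \;\le\; \sup_{x\in[0,R]}\U(x,s_n).
\]
Thus it suffices to show the right-hand side tends to $0$ as $n\to\infty$.

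For this I would apply Proposition \ref{prop:u uxx to 0} with the specific compact set $[0,R]\times\{0\}$ (taken inside $[0,\infty)\times\R$): the $C^{2,1}_{\mathrm{loc}}$ convergence $\U(\cdot,\cdot+s_n)\to 0$ implies in particular that
\[
\sup_{x\in[0,R]}\U(x,s_n)\;=\;\sup_{x\in[0,R]}\U(x,0+s_n)\;\longrightarrow\;0 \quad\text{as }n\to\infty.
\]
Combined with the previous inequality this forces $m\le 0$, contradicting $m\in(0,1)$, and completes the proof.

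I do not anticipate a serious obstacle here: the heavy lifting has already been done in Proposition \ref{prop:u uxx to 0} (which itself relied on Proposition \ref{prop:entire sol II-2}, i.e.\ the fact that $\kappa(t)\equiv 1$, to rule out the alternative stationary limit $V$ and leave only the zero limit). Once that $\alpha$-limit is pinned down to be $0$, the level-set statement is essentially a direct consequence of local uniform convergence together with spatial monotonicity of $\U$, and no further delicate estimate seems required.
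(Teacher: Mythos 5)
Your argument is correct and rests on exactly the same key fact as the paper's proof, namely the locally uniform decay $\U(\cdot,\cdot+s)\to 0$ as $s\to-\infty$ from Proposition \ref{prop:u uxx to 0}. The only (cosmetic) difference is that the paper avoids the contradiction setup by writing $m=\U(\xi_m(s),s)-\U(0,s)=\U_x(\xi^*,s)\,\xi_m(s)\le \U_x(0,s)\,\xi_m(s)$ via the mean value theorem and concavity, and then letting $\U_x(0,s)\to 0$, whereas you use the uniform smallness of $\U$ itself on a compact interval $[0,R]$; both are immediate consequences of the same proposition.
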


\begin{proof}
By the definition we have
$$
m=\U(\xi_m (s), s) -\U(0,s) =\U_x(\xi^* , s)\cdot \xi_m (s) \leq \U_x(0,s) \cdot \xi_m (s),\quad \mbox{for some }
\xi^* \in (0,\xi_m (s)).
$$
Since $\U_x(0,s)\to 0\ (s\to -\infty)$ by Proposition \ref{prop:u uxx to 0}, we have
$\xi_m(s) \to \infty$ as $s\to -\infty$.
\end{proof}

\begin{prop}\label{prop:calU to eta-m}
Let $\eta_m(t)$ be the solution of \eqref{ivp}. Then $\U(x+\xi_m(s), t+s)\to \eta_m(t)$ as $s\to -\infty$,
in $C^{2,1}_{loc}(\R\times \R)$ topology. In addition, $\xi'_m (s) \to -\infty$ as $s\to -\infty$.
\end{prop}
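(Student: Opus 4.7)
My plan is to analyze the translated solution
\[
W_s(x,t) := \U(x + \xi_m(s),\, t+s),
\]
which is defined on $\{x > -\xi_m(s)\}\times \R$ and solves $W_t = W_{xx} + f(W)$ with values in $(0,1)$. Since $\xi_m(s)\to\infty$ by Lemma \ref{lem:ximt to infty}, every compact subset of $\R\times\R$ eventually lies inside this domain as $s\to-\infty$. I would then invoke the same interior parabolic Schauder bounds used in \eqref{schauder est1} (the boundary at $x=-\xi_m(s)$ recedes to $-\infty$, so only \emph{interior} estimates are needed) to obtain uniform $C^{2+\alpha,1+\alpha/2}$ control on $\{W_s\}$ over compacts. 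A Cantor diagonal argument along any sequence $s_j\to-\infty$ extracts a subsequential limit $W\in C^{2+\alpha,1+\alpha/2}_{loc}(\R\times \R)$ which still solves $W_t=W_{xx}+f(W)$ on all of $\R\times\R$ and satisfies $W(0,0)=m$ since $\U(\xi_m(s),s)\equiv m$.

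The crux is to identify every such limit with $\eta_m(t)$. The monotonicity and concavity \eqref{monotone of U} of $\U$ pass to the limit, giving $W_x\geq 0$ and $W_{xx}\leq 0$ on $\R\times \R$, while $0\leq W\leq 1$. For each fixed $t$, the profile $W(\cdot,t)$ is therefore a concave, nondecreasing, bounded function on all of $\R$, and my key observation is the rigidity: such a function must be constant. Indeed, if $W_x(x_0,t)>0$ at some $x_0$, then concavity forces $W_x(x,t)\geq W_x(x_0,t)>0$ for all $x\leq x_0$, whence $W(x,t)\leq W(x_0,t)-W_x(x_0,t)(x_0-x)\to -\infty$ as $x\to-\infty$, contradicting $W\geq 0$. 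Hence $W_x\equiv 0$, so $W(x,t)=\bar W(t)$, and the PDE collapses to the ODE $\bar W'(t)=f(\bar W(t))$ with $\bar W(0)=m$; the unique such orbit, defined for all $t\in\R$ because $f>0$ on $(0,1)$ traps $\eta_m$ in $(0,1)$, is $\eta_m$. Since every subsequential limit is the same, the full family converges: $W_s\to\eta_m$ in $C^{2,1}_{loc}(\R\times\R)$.

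For the divergence $\xi'_m(s)\to-\infty$, I would differentiate the identity $\U(\xi_m(s),s)\equiv m$ in $s$, which is legitimate because $\U_x>0$ on $(0,\infty)\times\R$ makes $\xi_m$ a $C^1$ function by the implicit function theorem, yielding
\[
\xi'_m(s) \;=\; -\,\frac{\U_t(\xi_m(s),s)}{\U_x(\xi_m(s),s)}.
\]
Reading off the just-established $C^{2,1}_{loc}$ convergence at $(x,t)=(0,0)$ gives $\U_t(\xi_m(s),s)\to \eta_m'(0)=f(m)>0$ in the numerator while $\U_x(\xi_m(s),s)\to W_x(0,0)=0$ (through strictly positive values) in the denominator, so $\xi'_m(s)\to -\infty$. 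The hard part I anticipate is the concavity-to-rigidity step in the middle paragraph: it is elementary once one knows $W$ is defined on all of $\R$ in $x$, but this in turn rests on $\xi_m(s)\to\infty$ and the nontrivial fact $\kappa(t)\equiv 1$ from Proposition \ref{prop:entire sol II-2}, without which the level set $\xi_m(s)$ might fail to exist for $s\ll -1$ and the entire scheme would collapse.
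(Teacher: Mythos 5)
Your proposal is correct and shares the paper's overall skeleton: translate by $\xi_m(s)$, use interior parabolic estimates and a diagonal argument to extract a limit $W$ with $W(0,0)=m$, show $W$ is independent of $x$ so that the PDE collapses to $\eta'=f(\eta)$, conclude full convergence from uniqueness of the limit, and finally obtain $\xi_m'(s)=-\U_t/\U_x\to-\infty$ from \eqref{limits of Ux Ut}. The one genuine difference is how spatial constancy of the limit is established. The paper writes $\U(x+\xi_m(s),t+s)-\U(\xi_m(s),t+s)=\U_x(\rho x+\xi_m(s),t+s)\,x$ and kills this term using concavity together with $\U_x(0,t+s)\to 0$, i.e.\ it leans on Proposition \ref{prop:u uxx to 0}. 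You instead pass the inequalities $W_x\ge 0$, $W_{xx}\le 0$, $0\le W\le 1$ to the limit and invoke the elementary rigidity that a bounded, concave, nondecreasing $C^2$ function on all of $\R$ is constant (if $W_x(x_0,t)>0$ then $W\to-\infty$ as $x\to-\infty$). This is self-contained at the level of the limit object and avoids quoting the decay of $\U_x(0,\cdot)$ at this step; it does, as you note, still rely on Lemma \ref{lem:ximt to infty} (hence ultimately on Proposition \ref{prop:entire sol II-2} for the level set to exist for $s\ll-1$) so that $W$ is defined on all of $\R\times\R$. Both routes are short and rigorous; yours trades one citation for a one-line convexity argument.
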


\begin{proof}
For any time sequence $\{s_k\}$ decreasing to $-\infty$ and any $M>0,\ T>0$, by the parabolic estimate we have
$$
\|\U(x+\xi_m(s_k ), t+ s_k )\|_{C^{2+\alpha, 1+\alpha/2} ([-M,M]\times [-T,T])} \leq C,
$$
for some constant $C$ depending on $M$ and $T$ but not on $k$. Hence there is a subsequence $\{s_{k_j}\}$
of $\{s_k\}$  and a function $W_m (x,t)\in C^{2+\alpha,1+\alpha/2}_{loc} (\R\times \R)$ such that
$$
\U(x+\xi_m (s_{k_j}), t+ s_{k_j} )\to W_m (x,t) \mbox{ as } j\to \infty, \quad \mbox{ in } C^{2,1}_{loc}(\R\times \R) \mbox{ topology}.
$$
Clearly,
\begin{equation}\label{wm 00}
W_m(0,0) = \lim\limits_{j\to \infty} \U(\xi_m (s_{k_j}), s_{k_j} ) =m.
\end{equation}

Note that, for any $x\in [-M,M]$ and $s\in [-T,T]$,
\begin{eqnarray*}
\ & & \U(x+\xi_m (s_{k_j}), t+ s_{k_j} ) \\
& = & \U(x+\xi_m (s_{k_j}), t+ s_{k_j} ) - \U(\xi_m (s_{k_j}), t+ s_{k_j} ) + \U(\xi_m (s_{k_j}), t+ s_{k_j} )\\
& = & \U_x (\rho  x+\xi_m (s_{k_j}), t+ s_{k_j} ) \cdot x +  \U(\xi_m (s_{k_j}), t+ s_{k_j}),\quad \mbox{ for some } \rho\in (0,1).
\end{eqnarray*}
Since
$$
| \U_x (\rho x+\xi_m (s_{k_j}), t+ s_{k_j} ) \cdot x| \leq \U_x (0,t+  s_{k_j} ) \cdot M \to 0  \mbox{ as }j\to \infty,
$$
and
$$
\U(\xi_m (s_{k_j}), t+ s_{k_j} ) \to W_m(0,t) \mbox{ as }j\to \infty,
$$
we have
$$
\U(x+\xi_m (s_{k_j}), t+ s_{k_j} ) \to W_m(0,t) \mbox{ as }j\to \infty.
$$
Hence
$$
W_m(x,t) \equiv W_m(0,t),\quad x, t\in \R.
$$
In other words, $W_m(x,t)$ is actually independent of $x$. Consequently,
$$
\U_{xx}(x+\xi_m (s_{k_j}), t+ s_{k_j} )\to (W_m)_{xx} (x,t) \equiv 0\mbox{ as } j\to \infty, \quad \mbox{ in } L^\infty_{loc}
(\R\times \R) \mbox{ topology}.
$$

Now we show that $W_m(x,t)\equiv W_m(0,t)$ is nothing but the solution $\eta_m(t)$  of \eqref{ivp}.
In the equation
$$
\U_t (\xi_m (s_{k_j}), t+ s_{k_j} ) = \U_{xx} (\xi_m (s_{k_j}), t+ s_{k_j} ) + f(\U(\xi_m (s_{k_j}), t+ s_{k_j} )),
$$
by taking limit as $j\to \infty$  we have
$$
[W_m (0,t)]_t  = f(W_m(0,t)).
$$
Combining with \eqref{wm 00} we see that $W_m(x,t)\equiv W_m(0,t)$ is the unique solution $\eta_m(t)$ of \eqref{ivp}.

Since the limit function $\eta_m(t)$ is unique we conclude that
\begin{equation}\label{calU to eta t to -infty}
\U(x+\xi_m (s), t + s) -\eta_m(t) \to 0\mbox{ as } s\to -\infty,\quad \mbox{ in } C^{2,1}_{loc}(\R\times \R) \mbox{ topology}.
\end{equation}
In particular, at the point $(x,t)=(0,0)$, we have
\begin{equation}\label{limits of Ux Ut}
\U_x (\xi_m (s), s)\to 0,\quad \U_t(\xi_m (s), s)\to \eta'_m (0)= f(\eta_m (0)) =f(m)>0\quad \mbox{ as } s\to -\infty.
\end{equation}
By differentiating $\U(\xi_m(s),s)\equiv m$ with respect to $s$ we have
$$
\U_x(\xi_m(s),s) \cdot \xi'_m(s) + \U_t(\xi_m(s),s) \equiv 0.
$$
This reduces to $\xi'_m(s) \to -\infty\ (s\to -\infty)$ by \eqref{limits of Ux Ut},
and then the proposition is proved.
\end{proof}

\medskip
\noindent
{\it Proof of Theorem \ref{thm:II}}. The conclusions follow from Propositions \ref{prop:entire sol II-1} and \ref{prop:calU to eta-m}
directly. \hfill $\Box$

\begin{remark}\rm
In the locally uniform topology, the entire solution $\U(x,t)$ obtained in this section can be
regarded as a heteroclinic orbit connecting the identically zero and $V(x)$,
 while the spatially uniform solution $\eta_m(t)$ is a heteroclinic
orbit connecting the constant solution $0$ and $1$ to the equation in the whole space.
At $x=\infty$, however, $\U(x,t)$ always takes $1$. It seems that this is caused by the effect of the Dirichlet boundary condition.
On the other hand, the odd symmetric extension of the solutions around the origin give rise to
the symmetric solutions to the Allen-Cahn equation (that is, the reaction-diffusion equation with balanced bistable nonlinearity)
in the whole space.
Hence, our result also contributes to the study of
entire solutions of the Allen-Cahn equation.
\end{remark}

\medskip
\noindent
{\bf Acknowledgement}. The authors would like to thank the anonymous referees for their
valuable suggestions.


\end{document}